\def\ifl{\iffalse }
\def\bc{\begin{center}} \def\ec{\end{center}}
\def\ba{\begin{array}} \def\ea{\end{array}}
\def\bea{\begin{eqnarray}} \def\eea{\end{eqnarray}}
\def\beaa{\begin{eqnarray*}} \def\eeaa{\end{eqnarray*}}
\numberwithin{equation}{section}
\theoremstyle{definition}
\newtheorem{thm}{Theorem}[section]
\newtheorem{coro}[thm]{Corollary}
\newtheorem{lem}{Lemma}[section]
\theoremstyle{remark}
\newtheorem{rem}{Remark}[section]
\newtheorem*{rem*}{Remark}
\numberwithin{equation}{section}
\newcommand{\R}{\mathbb{R}}
\newcommand{\Z}{\mathbb{Z}}
\newcommand{\sech}{\mathop{\mathrm{sech}}}
\renewcommand{\div}{\mathop{\rm div}}
\newcommand{\pa}{\partial}
\newcommand{\na}{\nabla}
\newcommand{\La}{\Lambda}
\newcommand{\Lg}{\langle}
\newcommand{\Rg}{\rangle}
\newcommand{\De}{\Delta}
\newcommand{\ls}{\lesssim}
\newcommand{\T}{\mathbb{T}}
\newcommand{\LP}{\mathbb{P}}
\title[Semi-implicit schemes for the incompressible Euler Equations]{On semi-implicit schemes for the incompressible Euler equations via the vanishing viscosity limit}
\author[X. Cheng]{Xinyu Cheng}
 \address{X. Cheng, Research Institute of Intelligent Complex Systems, Fudan University, Shanghai, P.R. China}
\email{xycheng@fudan.edu.cn}
\author[Z. Luo]{Zhaonan Luo}
\address{Z. Luo, School of Mathematical Sciences, Fudan University, Shanghai, P.R. China}
\email{luozhn@fudan.edu.cn}
\author[S. Wang]{Sheng Wang}
\address{S. Wang, School of Mathematical Sciences, Peking University, Beijing, P.R. China}
\email{19121698850@163.com}
\begin{document}
\maketitle
\begin{abstract}
    A new type of systematic approach to study the incompressible Euler equations numerically via the
vanishing viscosity limit is proposed in this work. We show the new strategy is unconditionally stable that the $L^2$-energy dissipates and $H^s$-norm is uniformly bounded in time without any restriction on the time step. Moreover, first-order convergence of the proposed method is established including both low regularity and high regularity error estimates. The proposed method is extended to full discretization with a newly developed iterative Fourier spectral scheme. Another main contributions of this work is to propose a new integration by
parts technique to lower the regularity requirement from $H^4$ to $H^3$ in order to perform the $L^2$-error
estimate. To our best knowledge, this is one of the
very first work to study incompressible Euler equations by designing stable numerical schemes via the inviscid limit with rigorous analysis. Furthermore, we will present both low and high regularity errors from numerical experiments and demonstrate the dynamics in several benchmark examples.

\end{abstract}

\section{Introduction}
\subsection{Introduction to the models and the historical review}
In this presenting paper we are interested in the incompressible Euler equations in a periodic domain $\T^d=(\R/(2\pi))^d$ with dimension $d=2,3$:
\begin{equation}\label{EE}
\begin{cases}
&\pa_t u+u\cdot \na u+\na p=0,\qquad \na \cdot u=0;\\
&u|_{t=0}=u_0.
\end{cases}
\end{equation}
Here the solution $u:\R^+\times \T^d\to \R^d$ is the velocity field and $ p:\R^+\times \T^d \to \R$ is the pressure. In particular, the velocity field $u$ satisfies the divergence-free condition $\na \cdot u=0$ to guarantee the system is incompressible. In this work we are also concerned of the corresponding incompressible Navier-Stokes system (NS):
\begin{equation}\label{NSE}
\begin{cases}
&\pa_t u^\nu+u^\nu\cdot \na u^\nu+\na p^\nu=\nu \De u^\nu,\qquad \na \cdot u^\nu=0;\\
&u^\nu|_{t=0}=u_0.
\end{cases}
\end{equation}
It is worth mentioning here that the solution pair depends on the viscosity parameter $\nu$ and therefore we emphasize it as $(u^\nu,p^\nu)$. 

Both the Euler equations and Navier-Stokes equations play fundamental roles in the fluid dynamics. The Euler equations provide simplified models of fluid flows, which are useful in scenarios where the fluid behaves more like an ideal fluid, such as in many aerodynamic applications. Unlike the Euler equations, the Navier-Stokes equations are applicable to both inviscid and viscous fluid flows due to the viscosity. Therefore Navier-Stokes equations are widely used in blood flows, air flow around an airfoil, and ocean currents models. The local well-posedness of the Euler and Navier-Stokes equations were discussed in \cite{GLY19,M07,MR12} and the references therein. More specifically, for the two-dimensional Euler equations, Yudovich \cite{Y63} established the existence and uniqueness of global weak solutions. Beale, Kato and Majda \cite{BKM84} provided the BKM criterion for determining the global existence of strong solutions to the Euler equations. Kiselev and Sverak \cite{KS14} obtained a lower bound estimate for the double exponential growth of solutions to two-dimensional Euler equations on bounded domains. For the Navier-Stokes equations, Leray \cite{L34} established the existence of global weak solutions. Fujita and Kato \cite{FK64}
first proved global solutions of the Navier-Stokes equations with small initial value $u_0\in \dot{H}^{\frac{1}{2}}$. Well-posedness of mild solutions to the 3D NS-system has been studied by Lei and Lin in \cite{LL11}.

On the contrary, for the ill-posedness direction of the fluid systems, Bourgain and Li \cite{BL15} obtained strong ill-posedness of the Euler equations in borderline sobolev spaces. Recently, Elgindi \cite{E21} proved that low regular solutions of the Euler equations blow up in finite time. Bourgain and Pavlovic \cite{BP08} obtained the ill-posedness of solutions to the Navier-Stokes equations. Moreover, it is also worth mentioning that the non-uniqueness of low regular solutions to fluid equations have been studied in \cite{ABC22,BSV19,CKL21,DLS13,Is18,Luo19} etc.

The exact solutions cannot be derived in most scenarios, therefore a great many numerical methods to study the Euler and Navier-Stokes equations have been proposed including the backward Euler differentiation method \cite{Wang12}, finite element methods \cite{HR82,HR90,Lib21}, finite difference methods \cite{Chorin68,EL95,EL02}, spectral methods \cite{GZ03}, the Lagrange–Galerkin method \cite{He13,S88}, and the projection method for time discretization \cite{Chorin69,HW93,EL95,EL02}. The convergence of numerical solutions to the fluid equations above was all proved assuming a sufficiently smooth solution. It is also worth mentioning that recently Li, Ma and Schratz \cite{LMS22} developed low regularity methods dating back to Rousset-Schratz \cite{RS21}, Bai-Li-Wu \cite{BLW22} and Wu-Zhao \cite{WZ22}, where an exponential integrator can help to reduce the regularity requirements; however in this work we will not dig further in this direction.

The Euler and NS models are closely related; indeed the Euler equations can be viewed as a vanishing viscosity limit of Navier-Stokes equations, cf. \cite{CDE22,M07,GLY19}. In more details, Masmoudi in \cite{M07} proved the $H^{s-2}$ convergence of the NS solution $u^\nu$ to the Euler solution $u$ as $\nu\to0$ assuming the initial data $u_0\in H^s$ for $s>2$. The rate of convergence was proved to be $\|u^\nu-u\|_{H^{s-2}}=O(\nu)$. Moreover, the $H^s$ convergence is also proved with help of a smoothing mollification of the initial data. In effect such mollification can be understood as a Fourier truncation; thus we are motivated to solve it by implementing Fourier spectral methods. Consequently, one of the modest goals of this presenting paper is to provide a systematic approach to study the Euler equations numerically by the Fourier spectral method via the vanishing viscosity limit. Another main contributions of this work is to propose a new integration by parts technique to lower the regularity requirement from $H^4$ to $H^3$ in order to perform the $L^2$-error estimate. To our best knowledge, this is one of the
very first work to study incompressible Euler equations by designing stable numerical schemes via the inviscid limit with rigorous analysis.

The first semi-implicit Fourier spectral scheme we consider is the following:
\begin{equation}\label{1.3}
\begin{cases}
&\frac{u^{n+1}-u^n}{\tau}+\Pi_N(u^{n}\cdot \na u^{n+1})+\na p^n= \nu \Delta u^{n+1},\\
&\div u^{n+1}=0,\\
&u^0=\Pi_N u_0,
\end{cases}
\end{equation}
where $\tau$ is the time step. For $N\geq 2$, we introduce the space
$$X_N=\text{span}\left\{\cos(k\cdot x)\ ,\ \sin(k\cdot x):\ \ k=(k_1,k_2)\in\Z^2\ ,\ |k|_\infty=\max\{|k_1|,|k_2|\}\leq N \right\} \ .$$
Then we define $\Pi_N$ to be the truncation operator of Fourier modes $|k|_\infty\leq N$. $\Pi_Nu_0\in X_N$ and by induction, we have $u^n\in X_N\ ,\forall n\geq 0$. Moreover in \eqref{1.3} the viscosity term $\nu \Delta u^{n+1}$ is unknown and the nonlinear term $u^{n}\cdot \na u^{n+1}$ is linear with respect to $u^{n+1}$. In addition we can further derive that $\div u^n=0$ for all $n$ by induction. To solve $u^{n+1}$, we recall the Leray projection $\LP$, namely the $L^2$-orthogonal projection onto the divergence-free subspace: for any $v\in L^2$ we have $\LP v=v-\na q$, where $q\in H^1$ solves the following Poisson equation under periodic boundary conditions:
\begin{equation*}
        \De q=\na \cdot v.
\end{equation*}
Then we apply the Leray projection $\LP$ to derive that 
\begin{equation}\label{1.5}
\frac{u^{n+1}-u^n}{\tau} +\LP\Pi_N(u^n \cdot \na u^{n+1})=\nu\Delta u^{n+1}.
\end{equation}
In fact we will solve the Euler equations from the first order semi-implicit scheme \eqref{1.5}. We shall show our scheme \eqref{1.5} preserves energy stability (dissipates in $n$) and preserves regularity boundedness; moreover the $L^2$ error will be shown with numerical evidence. Our main results are stated below.

\subsection{Main results}
To start with, we first present an unconditional stability result of the semi-implicit scheme \eqref{1.3} or \eqref{1.5}.
\begin{thm}[Unconditional stability]\label{Thm1.1}
Consider the Euler equations \eqref{EE} in $\T^2$ with periodic boundary
conditions and we solve \eqref{EE} using the semi-implicit Fourier spectral scheme \eqref{1.3}. Assume that the initial data $u_0\in H^s(\T^2)$ for any $s>2$ then there exist $T>0$ such that the following statements hold for any $M\in \mathbb{Z}^+$ and $n\le M$ without any restriction on the time step $\tau=\frac{T}{M}$.
\begin{description}
    \item[(i)] The $L^2$ energy dissipates in time: 
    \begin{equation}\label{1.6}
        \|u^{n+1}\|_{L^2}\le \|u^n\|_{L^2}.
    \end{equation}
\item[(ii)] The $H^s$ norm is uniformly bounded: 
\begin{equation}\label{1.7}
    \sup_n\|u^n\|_{H^s}\le 4\|u_0\|_{H^s}.
\end{equation}
\end{description}

\end{thm}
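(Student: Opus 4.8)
The plan is to deduce both assertions from discrete energy estimates obtained by testing the Leray-projected scheme \eqref{1.5} against $u^{n+1}$ in the relevant inner product. A preliminary remark I would record is that \eqref{1.5} is uniquely solvable: on the finite-dimensional divergence-free subspace of $X_N$ the map $v\mapsto v+\tau\LP\Pi_N(u^n\cdot\na v)-\tau\nu\De v$ is linear, and the $L^2$ computation below shows it is injective (a vanishing right-hand side forces $v=0$), hence invertible. Combined with the already-noted fact that $\div u^n=0$ for all $n$, this lets me treat every $u^n$ as a smooth divergence-free trigonometric polynomial.

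For part (i) I would take the $L^2$ inner product of \eqref{1.5} with $u^{n+1}$. The time-difference term yields $\tfrac{1}{2\tau}\big(\|u^{n+1}\|_{L^2}^2-\|u^n\|_{L^2}^2+\|u^{n+1}-u^n\|_{L^2}^2\big)$ by polarization, and the viscous term contributes $-\nu\|\na u^{n+1}\|_{L^2}^2\le 0$. The decisive point is that the convective term vanishes: since $\LP$ and $\Pi_N$ are self-adjoint $L^2$-projections that fix the divergence-free polynomial $u^{n+1}$, one has $\langle\LP\Pi_N(u^n\cdot\na u^{n+1}),u^{n+1}\rangle=\langle u^n\cdot\na u^{n+1},u^{n+1}\rangle=\tfrac12\int_{\T^2}u^n\cdot\na|u^{n+1}|^2=-\tfrac12\int_{\T^2}(\div u^n)|u^{n+1}|^2=0$. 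Discarding the two nonnegative terms gives \eqref{1.6} with no condition on $\tau$.

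For part (ii) I would apply $\Lambda^s:=(1-\De)^{s/2}$ to \eqref{1.5} and test against $\Lambda^s u^{n+1}$. The time-difference term produces $\tfrac1\tau\big(\|u^{n+1}\|_{H^s}^2-\langle u^n,u^{n+1}\rangle_{H^s}\big)$ and the viscous term is again nonpositive. As Fourier multipliers, $\LP$ and $\Pi_N$ commute with $\Lambda^s$ and still fix $u^{n+1}$, so the nonlinear contribution reduces to $\langle\Lambda^s(u^n\cdot\na u^{n+1}),\Lambda^s u^{n+1}\rangle$. Splitting this as $\langle[\Lambda^s,u^n\cdot\na]u^{n+1},\Lambda^s u^{n+1}\rangle+\langle u^n\cdot\na\Lambda^s u^{n+1},\Lambda^s u^{n+1}\rangle$, the second term vanishes exactly as in part (i), while the first is controlled by the Kato--Ponce commutator estimate together with the embedding $H^s\hookrightarrow W^{1,\infty}$ (available since $s>2=1+\tfrac d2$), giving $\|[\Lambda^s,u^n\cdot\na]u^{n+1}\|_{L^2}\lesssim\|u^n\|_{H^s}\|u^{n+1}\|_{H^s}$. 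Dropping the viscous dissipation and applying Cauchy--Schwarz to $\langle u^n,u^{n+1}\rangle_{H^s}$ leaves $\tfrac1\tau\|u^{n+1}\|_{H^s}^2\le\tfrac1\tau\|u^n\|_{H^s}\|u^{n+1}\|_{H^s}+C\|u^n\|_{H^s}\|u^{n+1}\|_{H^s}^2$; dividing through by $\|u^{n+1}\|_{H^s}$ linearizes the inequality into
\[
\big(1-C\tau\|u^n\|_{H^s}\big)\,\|u^{n+1}\|_{H^s}\ \le\ \|u^n\|_{H^s}.
\]

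The crux, and the step I expect to be the main obstacle, is upgrading this implicit one-step bound to the uniform estimate \eqref{1.7} with no constraint on $\tau$. I would argue by induction on $n$: assuming $\|u^k\|_{H^s}\le 4\|u_0\|_{H^s}$ for every $k<n$, each denominator obeys $1-C\tau\|u^k\|_{H^s}\ge 1-4C\tau\|u_0\|_{H^s}$, which I keep strictly positive by fixing the horizon $T$ so small that $4C\tau\|u_0\|_{H^s}\le\tfrac12$ for all admissible $\tau\le T$. Telescoping the one-step bound from $0$ to $n$ then gives $\|u^n\|_{H^s}\le\|u_0\|_{H^s}(1-4C\tau\|u_0\|_{H^s})^{-n}\le\|u_0\|_{H^s}\,e^{8C\|u_0\|_{H^s}n\tau}\le\|u_0\|_{H^s}\,e^{8C\|u_0\|_{H^s}T}$, where I used $(1-x)^{-1}\le e^{2x}$ on $[0,\tfrac12]$ and $n\tau\le M\tau=T$. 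Choosing $T\le(8C\|u_0\|_{H^s})^{-1}$ bounds the exponential by $e<4$, which re-establishes the induction hypothesis and yields \eqref{1.7}. What makes the accounting delicate is that a single $T$ must serve for arbitrarily large $M$ (hence arbitrarily small $\tau$) and for $M=1$ (hence $\tau=T$) simultaneously; it is precisely the implicit, divide-by-$\|u^{n+1}\|_{H^s}$ structure afforded by treating the transport term semi-implicitly, rather than any CFL-type smallness of $\tau$, that makes this possible and hence delivers genuine unconditional stability.
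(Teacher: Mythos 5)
Your proposal is correct and follows essentially the same route as the paper's proof: testing with $u^{n+1}$ and $\Lambda^s u^{n+1}$, killing the transport term via the self-adjointness of $\LP$ and $\Pi_N$ (which fix $u^{n+1}$), controlling the commutator through Lemma~\ref{CL} with the embedding $H^s\hookrightarrow W^{1,\infty}$, and closing a discrete Gronwall induction on a horizon $T\sim (C_s\|u_0\|_{H^s})^{-1}$. The only deviations are cosmetic: you apply Cauchy--Schwarz to $\langle u^n,u^{n+1}\rangle_{H^s}$ and telescope a linear one-step bound with $(1-x)^{-1}\le e^{2x}$, where the paper instead uses the polarization identity and iterates $\|u^{n+1}\|_{H^s}^2\le (1+\tfrac{1}{M-1})\|u^n\|_{H^s}^2$ up to the constant $16$ (your version handles $M=1$ more cleanly), and your preliminary unique-solvability observation for the implicit step is a sound addition that the paper leaves implicit.
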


\begin{rem}\label{rem1.1}
   From the local well-posedness theory of nonlinear PDE, one usually needs to take sufficiently small $T$ to handle $\LP\Pi_N(u^n \cdot \na u^{n+1})$, otherwise the solution may behave as a double exponential growth (cf. \cite{KS14}). Through the commutator estimates and embedding theory $\|\na f\|_{L^\infty}\leq C_s\|f\|_{H^s}$, where $C_s>0$ only depends on $s>2$, we found that $C_sT\|u^n\|_{H^s}\leq 1$ is the key to completing uniformly bounded estimates. This together with \eqref{1.7} allows us to take the time period $T= \frac{1}{4 C_s\|u_0\|_{H^s}}$. However in practice we do not need to consider such short time period, we refer the readers to Section~\ref{sec6} for more details.
\end{rem}
It is worth mentioning that if we further require that the size of the time step $\tau$ is small then an extra $H^1$-energy dissipation can be derived:
\begin{coro}[$H^1$-energy stability]\label{Cor1.2}
    Let the same assumptions in Theorem~\ref{Thm1.1} hold. We further assume $\tau\le c\nu$ for some absolute constant $c>0$ that can be computed, then the following $\dot{H}^1$ energy dissipation is true:
    $$\|\na u^{n+1}\|_2+\|\na (u^{n+1}-u^n)\|_2\le \|\na u^n\|_2.$$
As a result, the $H^1$ energy stability follows: $\|u^{n+1}\|_{H^1}\le \|u^n\|_{H^1}  $.
\end{coro}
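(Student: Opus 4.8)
The plan is to run a discrete \emph{enstrophy} estimate in the vorticity formulation, where the two-dimensional structure eliminates the troublesome nonlinear production and isolates precisely the quantity that the time step must control. First I would pass to vorticity: writing $\omega^{n}=\curl u^{n}=\pa_1 u^{n}_2-\pa_2 u^{n}_1$ and applying the scalar curl to \eqref{1.5}, and using that $\curl$ kills gradients and, being a Fourier multiplier, commutes with $\LP$, $\Pi_N$ and $\Delta$, one obtains the clean scheme
\begin{equation*}
\frac{\omega^{n+1}-\omega^{n}}{\tau}+\Pi_N\,\curl\!\big(u^{n}\cdot\na u^{n+1}\big)=\nu\De\omega^{n+1}.
\end{equation*}
Since $u^{n+1}\in X_N$ forces $\omega^{n+1}\in X_N$, and since for divergence-free periodic fields in $\T^2$ one has the isometry $\|\na u^{n}\|_{L^2}=\|\omega^{n}\|_{L^2}$ (hence also $\|\na(u^{n+1}-u^{n})\|_{L^2}=\|\omega^{n+1}-\omega^{n}\|_{L^2}$), the corollary is equivalent to a dissipation statement for $\|\omega^{n}\|_{L^2}$.

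Next I would test the vorticity scheme against $\omega^{n+1}$ in $L^2(\T^2)$. The discrete-time term produces, via $a(a-b)=\tfrac12(a^2-b^2+(a-b)^2)$, the telescoping quantity $\tfrac1{2\tau}\big(\|\omega^{n+1}\|_{L^2}^2-\|\omega^{n}\|_{L^2}^2+\|\omega^{n+1}-\omega^{n}\|_{L^2}^2\big)$, while the viscous term yields $-\nu\|\na\omega^{n+1}\|_{L^2}^2$. For the nonlinear term I would use that $\Pi_N$ is an orthogonal projection and $\omega^{n+1}\in X_N$ to drop $\Pi_N$, and then split
\begin{equation*}
\curl\!\big(u^{n}\cdot\na u^{n+1}\big)=u^{n}\cdot\na\omega^{n+1}+R(u^{n},u^{n+1}),
\end{equation*}
where $R$ is linear in each of $\na u^{n}$ and $\na u^{n+1}$. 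The transport part is annihilated by the enstrophy cancellation $\int_{\T^2}(u^{n}\cdot\na\omega^{n+1})\,\omega^{n+1}=-\tfrac12\int_{\T^2}(\div u^{n})\,|\omega^{n+1}|^2=0$.

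The heart of the argument—and the step I expect to be the main obstacle—is the remainder $R$. The key point I would exploit is that $R(u^{n},u^{n})\equiv 0$ whenever $\div u^{n}=0$; a direct expansion using $\pa_2 u^{n}_2=-\pa_1 u^{n}_1$ confirms this identity. Consequently $R(u^{n},u^{n+1})=R\big(u^{n},u^{n+1}-u^{n}\big)$ is linear in the increment, so $|R(u^{n},u^{n+1})|\lec |\na u^{n}|\,|\na(u^{n+1}-u^{n})|$ pointwise, whence
\begin{equation*}
\Big|\int_{\T^2}R(u^{n},u^{n+1})\,\omega^{n+1}\Big|\lec \|\na u^{n}\|_{L^\infty}\,\|\na(u^{n+1}-u^{n})\|_{L^2}\,\|\na u^{n+1}\|_{L^2}.
\end{equation*}
Here I would invoke Theorem~\ref{Thm1.1}: the embedding $\|\na u^{n}\|_{L^\infty}\le C_s\|u^{n}\|_{H^s}$ together with \eqref{1.7} bounds this factor by $K:=4C_s\|u_0\|_{H^s}$ uniformly in $n$. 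Crucially, the increment factor $\|\na(u^{n+1}-u^{n})\|_{L^2}=\|\omega^{n+1}-\omega^{n}\|_{L^2}$ is absorbed, by Young's inequality, into the numerical dissipation $\tfrac1{2\tau}\|\omega^{n+1}-\omega^{n}\|_{L^2}^2$, leaving only a residual production of size $\tau K^2\|\na u^{n+1}\|_{L^2}^2$.

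Finally I would close the estimate using the Poincaré inequality $\|\na\omega^{n+1}\|_{L^2}\ge\|\omega^{n+1}\|_{L^2}=\|\na u^{n+1}\|_{L^2}$ (legitimate since $\omega^{n+1}$ has zero mean on $\T^2$): the viscous term then dominates the residual production exactly when $\tau K^2\le\nu$, i.e.\ under $\tau\le c\nu$ with $c$ an explicitly computable constant of size comparable to $(4C_s\|u_0\|_{H^s})^{-2}$. This yields the discrete enstrophy dissipation $\|\na u^{n+1}\|_{L^2}^2+\|\na(u^{n+1}-u^{n})\|_{L^2}^2\le\|\na u^{n}\|_{L^2}^2$, which is the asserted $\dot H^1$ decay and in particular gives $\|\na u^{n+1}\|_{L^2}\le\|\na u^{n}\|_{L^2}$; combining it with the $L^2$ dissipation of Theorem~\ref{Thm1.1}(i) produces the full $H^1$ stability $\|u^{n+1}\|_{H^1}\le\|u^{n}\|_{H^1}$.
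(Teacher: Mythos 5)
Your proof is correct, and it takes a genuinely different route from the paper's. The paper (Appendix~\ref{appA}) stays in velocity variables: it tests the scheme \eqref{Semischeme2} with $-\De u^{n+1}$, invokes the two-dimensional identity $\int_{\T^2}(u\cdot\na)u\cdot\De u\,dx=0$ for divergence-free $u$ to replace $u^n$ by the increment $u^{n+1}-u^n$, integrates by parts to reach $-\tau\Lg \na(u^{n+1}-u^n)\cdot\na u^{n+1},\na u^{n+1}\Rg$, bounds this by $\tau\|\na(u^{n+1}-u^n)\|_{L^2}\|\na^2 u^{n+1}\|_{L^2}^2$ via Ladyzhenskaya--Poincar\'e, and then — the paper's key step — derives from the scheme itself the a priori bound $\|\na(u^{n+1}-u^n)\|_{L^2}\le C\tau\le \nu$, so the nonlinearity is absorbed by the \emph{spatial} viscous term $\nu\tau\|\na^2u^{n+1}\|_{L^2}^2$. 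You instead pass to vorticity, where the same 2D structure (no vortex stretching; your identity $R(u,u)=0$ for $\div u=0$ is equivalent to the paper's $\int(u\cdot\na)u\cdot\De u\,dx=0$) appears as the diagonal vanishing of the remainder, and you absorb the increment factor into the scheme's intrinsic \emph{numerical} dissipation $\frac{1}{2\tau}\|\omega^{n+1}-\omega^n\|_{L^2}^2$, with the residual $\tau K^2\|\omega^{n+1}\|_{L^2}^2$ absorbed into $\nu\|\na\omega^{n+1}\|_{L^2}^2$ via Poincar\'e. Your route buys two things: it needs only the $W^{1,\infty}$ bound $\|\na u^n\|_{L^\infty}\le C_s\|u^n\|_{H^s}$ from Theorem~\ref{Thm1.1}, hence works for any $s>2$, whereas the paper's increment estimate $\|\na(u^{n+1}-u^n)\|_{L^2}\lesssim \tau$ uses $\|u^{n+1}\|_{\dot H^3}$ and so implicitly requires $H^3$ data; and it shows the discrete-in-time dissipation term alone can control the consistency error of freezing $u^n$ in the transport coefficient. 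Two small bookkeeping remarks: if you spend the full $\frac{1}{2\tau}\|\omega^{n+1}-\omega^n\|_{L^2}^2$ in Young's inequality, the increment term drops out of your final inequality, so absorb only half of it (paying $\tau K^2\|\omega^{n+1}\|_{L^2}^2$ rather than $\tfrac{\tau K^2}{2}\|\omega^{n+1}\|_{L^2}^2$) to retain $\|\omega^{n+1}\|_{L^2}^2+\tfrac12\|\omega^{n+1}-\omega^n\|_{L^2}^2\le\|\omega^n\|_{L^2}^2$; and note that both your argument and the paper's produce the dissipation with \emph{squared} norms, so the unsquared display in the corollary statement is not literally what either proof yields — you lose nothing relative to the paper on that point. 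Likewise, your constant $c\sim(4C_s\|u_0\|_{H^s})^{-2}$ depends on $\|u_0\|_{H^s}$, exactly as the paper's implicit constant does.
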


The error can be controlled by the following theorem.

\begin{thm}[Error estimates]\label{Thm1.3}
Let the same assumptions in Theorem~\ref{Thm1.1} hold. Assume that $v(t,x) $ is the exact solution to \eqref{EE} with the same initial condition $u_0\in H^s(\T^2)$ for $s\ge3$. Then the following error estimates hold.
\begin{description}
    \item[(i)] The $L^2$-error estimate holds:
 \begin{equation}\label{1.8}
   \sup_{n} \|u^{n+1} -v \left( t_{n+1}\right)\|_{L^{2}}\leq C_1(\nu+\tau+N^{-s+1});
 \end{equation}
 
\item[(ii)]  Additionally if $u_0\in H^{s+3}(\T^2)$, then the $H^s$-error estimate holds:
\begin{equation}\label{1.9}
   \sup_n\|u^{n+1} -v \left( t_{n+1}\right)\|_{H^s}\le  C_2(\tau +N^{-2}+N^2\nu).
\end{equation}
\end{description}
Here the constant $C_1>0$ above only depends on the initial condition $\|u_0\|_{H^s}$ and $C_2>0$ depends only on $\|u_0\|_{H^{s+3}}$.

\end{thm}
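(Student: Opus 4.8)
The plan is to prove both estimates by a triangle-inequality decomposition that isolates the three error sources and to close each one with a discrete energy estimate combined with the a priori bounds of Theorem~\ref{Thm1.1}. For part (i) I would insert the exact Navier--Stokes flow $v^\nu$ (same initial data $u_0$, viscosity $\nu$) as an intermediate object and write
\begin{equation*}
\|u^{n+1}-v(t_{n+1})\|_{L^2}\le \|u^{n+1}-v^\nu(t_{n+1})\|_{L^2}+\|v^\nu(t_{n+1})-v(t_{n+1})\|_{L^2}.
\end{equation*}
The second term is precisely the vanishing-viscosity error, which by the estimate of Masmoudi recalled in the introduction is $O(\nu)$ in $H^{s-2}\hookrightarrow L^2$. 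Since the artificial viscosity in \eqref{1.5} matches that of $v^\nu$, the first term is a pure time-plus-truncation error carrying no factor of $\nu$; this is the mechanism producing the clean splitting $C_1(\nu+\tau+N^{-s+1})$.

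To bound $\|u^{n+1}-v^\nu(t_{n+1})\|_{L^2}$ I would compare the scheme with $\Pi_N v^\nu$ and set $e^n=u^n-\Pi_N v^\nu(t_n)$, the tail $(\mathrm{Id}-\Pi_N)v^\nu$ contributing an additional $O(N^{-s})$. Substituting $\Pi_N v^\nu$ into \eqref{1.5} produces a consistency residual $\rho^n$ whose genuinely $O(\tau)$ part comes from the difference quotient and whose $O(N^{-s+1})$ part comes from $(\mathrm{Id}-\Pi_N)(v^\nu\cdot\na v^\nu)$, using that $v^\nu\cdot\na v^\nu\in H^{s-1}$ for $s>2$. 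For stability I would test the error equation with $2\tau e^{n+1}$ and use $2\langle a-b,a\rangle=\|a\|^2-\|b\|^2+\|a-b\|^2$. Writing the nonlinear difference as $e^n\cdot\na u^{n+1}+\Pi_N v^\nu(t_n)\cdot\na e^{n+1}$, the second piece pairs to zero because $\Pi_N v^\nu$ is divergence free and $e^{n+1}\in X_N$, while the first is controlled by $\|\na u^{n+1}\|_{L^\infty}\|e^n\|\,\|e^{n+1}\|\lesssim\|u_0\|_{H^s}\|e^n\|\,\|e^{n+1}\|$ thanks to \eqref{1.7} and $H^s\hookrightarrow W^{1,\infty}$. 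Together with the dissipative term $2\tau\nu\|\na e^{n+1}\|^2$ this yields a recursion to which discrete Gronwall applies, giving $\sup_n\|e^n\|_{L^2}\lesssim\tau+N^{-s+1}$.

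The crux, and the place where the announced reduction from $H^4$ to $H^3$ enters, is the $O(\tau)$ part of $\rho^n$. Its integral Taylor remainder is $\tfrac1\tau\int_{t_n}^{t_{n+1}}(s-t_n)\pa_{tt}v^\nu\,ds$, and a naive $L^2$ bound of $\pa_{tt}v^\nu$ costs four derivatives since $\pa_{tt}v^\nu$ contains $\nu\De\pa_t v^\nu$. I would avoid this by not estimating $\rho^n$ in $L^2$ directly: after pairing with $e^{n+1}$, integrate by parts to move the Laplacian in $\nu\De\pa_t v^\nu$ onto $e^{n+1}$, so that only $\nu\|\na\pa_t v^\nu\|_{L^2}$ survives, which needs $\pa_t v^\nu\in H^{s-2}$, i.e. merely $s\ge3$; the resulting $\tfrac{\nu}{2}\|\na e^{n+1}\|^2$ is absorbed by the scheme's own dissipation. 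I expect this integration-by-parts step, together with the bookkeeping showing every surviving term is either $O(\tau)$, $O(N^{-s+1})$, or an absorbable $O(\nu)$, to be the main technical obstacle.

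For part (ii) I would run the analogous argument in $H^s$, now comparing $u^n$ directly with the truncated Euler solution $\Pi_N v(t_n)$. Testing the $\La^s$-differentiated error equation with $\La^s e^{n+1}$, the transport term is handled by a Kato--Ponce commutator estimate, its top-order part cancelling by the divergence-free condition and the remainder bounded by $\|\na\Pi_N v\|_{L^\infty}\|e^{n+1}\|_{H^s}^2$. Three consistency contributions then appear: the time error $O(\tau)$, which now needs $\pa_{tt}v\in H^s$ and hence propagation of $H^{s+3}$ regularity for the Euler flow; the truncation of the nonlinearity, giving $N^{-2}$ in $H^s$ because $v\cdot\na v\in H^{s+2}$ when $u_0\in H^{s+3}$; and, crucially, the artificial-viscosity residual $\nu\De\Pi_N v$, whose $H^s$ norm is bounded by $N^2\nu\|v\|_{H^s}$ via the inverse (Bernstein) inequality on $X_N$, producing the $N^2\nu$ term. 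Discrete Gronwall, again using the uniform bound \eqref{1.7}, then closes \eqref{1.9}.
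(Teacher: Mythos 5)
Your proposal is correct, and in part (i) it is essentially the paper's argument: the same triangle inequality through the Navier--Stokes flow, with Masmoudi's inviscid-limit bound (Lemma~\ref{Mas}, \eqref{LV}) supplying the $O(\nu)$ term, the same discrete $L^2$ energy estimate closed by a discrete Gronwall iteration under the uniform bounds of Theorem~\ref{Thm1.1}, and---crucially---the same integration-by-parts device: you move the Laplacian in the viscous time-residual onto the error and absorb $\tfrac{\nu}{2}\tau\|\na e^{n+1}\|_{L^2}^2$ into the scheme's own dissipation, which is exactly how the paper lowers the requirement from $H^4$ to $H^3$ (its term $I_{2,2}$). The implementation differs in two secondary ways. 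First, you expand the consistency residual to second order in time, so $\pa_{tt}v^\nu$ appears and you must verify that its only four-derivative part is $\nu\De\pa_t v^\nu$ (your accounting of this is right, since $\na\pa_t v^\nu$ costs only $H^3$); the paper never forms second time derivatives at all---it keeps the scheme and the PDE in integral form and applies the fundamental theorem of calculus to first time derivatives only (terms $I_{2,2}$, $I_{3,1}$, $I_{3,2}$), reaching the same $H^3$ count with less bookkeeping. Second, you compare against $\Pi_N v^\nu$ and collect one consistency residual, whereas the paper compares against $v^\nu$ itself and handles the Fourier tails explicitly ($I_4$, and the cancellation in $I_{3,4}$, which is the analogue of your ``pairs to zero'' observation for the implicit transport term). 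In part (ii) your route is genuinely different and arguably more direct: you compare the scheme with the truncated Euler solution and treat the artificial viscosity $\nu\De\Pi_N v$ as a consistency residual, extracting $N^2\nu$ from the inverse (Bernstein) inequality on $X_N$; the paper instead runs the $H^s$ estimate against the Navier--Stokes solution and obtains the $N^2\nu$ term (together with $\|\Pi_N u_0-u_0\|_{H^s}\lesssim N^{-2}$) from the $H^s$ inviscid-limit estimate \eqref{V}, proved in Appendix~\ref{appB} by mollifying the initial data at scale $\sigma=1/N$. Your version bypasses that appendix machinery for (ii), at the price of propagating $H^{s+3}$ regularity along the Euler flow---which the paper needs anyway for its $N^{-2}$ truncation terms (e.g.\ $\|\La^{s+1}(\mathcal{I}-\Pi_N)u\|_{L^2}\lesssim N^{-2}\|u_0\|_{H^{s+3}}$ in $I'_{3,4}$)---so the regularity hypotheses come out identical, and both arguments deliver \eqref{1.8} and \eqref{1.9} with the stated dependence of $C_1$ and $C_2$.
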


\begin{rem}
  It is worth mentioning that (see \cite{LMS22} for example) in order to obtain an $L^2$-error estimate, the usual semi-implicit method may require that the solution to be $H^4$. However, our method proposes a systematic way to lower that requirement to $H^3$ through an integration by parts technique.
\end{rem}

\begin{rem}
   We discuss more about the $H^s$ error estimate in Theorem~\ref{Thm1.3} here. Note that the term $N^2\nu$ may lead to a large error as $N\to \infty$ (we refer the readers to Table~\ref{table3} in Section~\ref{sec6}) therefore we need $\nu \ll N^{-2}$ in the inviscid limit sense in order to approximate the solutions to the Euler equations. However, such term will not appear when solving the NS system with fixed $\nu$. Therefore our semi-implicit scheme is stable and accurate in solving NS system too.
\end{rem}

\begin{rem}
    It is also worth mentioning the semi-implicit scheme adopted by Guo and Zou in \cite{GZ03}:
    \begin{equation}\label{1.10}
\frac{u^{n+1}-u^n}{\tau} +\LP\Pi_N(u^n \cdot \na u^{n})=\nu\Delta u^{n+1}.
\end{equation}
Their scheme \eqref{1.10} is easier to compute since the nonlinear term is treated known from the previous time step $n$, but it seems very challenging to prove the stability result as in Theorem~\ref{Thm1.1} due to the nonlinearity. Moreover, the error estimate for the scheme \eqref{1.10} requires that $\tau\le c_1N^{-1}$ for some small $c_1$ in \cite{GZ03}. On the other hand, the new scheme \eqref{1.5} is harder to compute numerically via the Fourier spectral method; indeed as suggested in \cite{LMS22}, a convolution type method is required. In this paper we present a new iteration idea that avoid the long computation and more details can be found in Section~\ref{sec5}. One can immediately derive as a direct corollary of Theorem~\ref{Thm1.3} that the schemes \eqref{1.3} and \eqref{1.10} differ by an $O(\tau)$ error.
\end{rem}

\subsection{Organization of the presenting paper}
The presenting paper is organized as follows. In Section~\ref{sec2} we list the notation and preliminaries including several useful lemmas. In Section~\ref{sec3} we prove Theorem~\ref{Thm1.1} and the proof to Theorem~\ref{Thm1.3} can be found in Section~\ref{sec4}. We give details of the full discretization of the scheme through a new iterative Fourier spectral method in Section~\ref{sec5}. We will provide numerical evidence in Section~\ref{sec6} and the proof of Corollary~\ref{Cor1.2} and Lemma~\ref{Mas} can be found in the Appendix~\ref{appA} and Appendix~\ref{appB} respectively.

\section{Notation and preliminaries}\label{sec2}
\subsection{Notation}
Throughout this paper, for any two (non-negative in particular) quantities $X$ and $Y$, we denote $X \lesssim Y$ if
$X \le C Y$ for some constant $C>0$. Similarly $X \gtrsim Y$ if $X
\ge CY$ for some $C>0$. We denote $X \sim Y$ if $X\lesssim Y$ and $Y
\lesssim X$. The dependence of the constant $C$ on
other parameters or constants are usually clear from the context and
we will often suppress  this dependence. We shall denote
$X \lesssim_{Z_1, Z_2,\cdots,Z_k} Y$
if $X \le CY$ and the constant $C$ depends on the quantities $Z_1,\cdots, Z_k$.

For any two quantities $X$ and $Y$, we shall denote $X\ll Y$ if
$X \le c Y$ for some sufficiently small constant $c$. The smallness of the constant $c$ is
usually clear from the context. The notation $X\gg Y$ is similarly defined. Note that
our use of $\ll$ and $\gg$ here is \emph{different} from the usual Vinogradov notation
in number theory or asymptotic analysis.

We define $[A,B]$ to be $AB-BA$, namely the usual commutator.

For a real-valued function $u:\Omega \to \R$ we denote its usual Lebesgue $L^p$-norm by
\begin{align*}
    \|u\|_{p}=\|u\|_{L^p(\Omega)}=\begin{cases}
       & \left(\int_{\Omega} |u|^p\ dx\right)^{\frac{1}{p}},\quad  1\le p<\infty;\\
       & \operatorname{esssup}_{x\in\Omega}|u(x)|,\quad p=\infty.
    \end{cases}
\end{align*}
Similarly, we use the weak derivative in the following sense: For  $u$, $v\in L^1_{loc}(\Omega)$, (i.e they are locally integrable); $\forall\phi\in C^{\infty}_0(\Omega)$, i.e $\phi$ is infinitely differentiable (smooth) and compactly supported; and 
$$\int_{\Omega}u(x)\ \partial^{\alpha} \phi(x)\ dx=(-1)^{\alpha_1+\cdots+\alpha_n}\int_{\Omega} v(x)\ \phi(x)\ dx ,$$
then $v$ is defined to be the weak partial derivative of $u$, denoted by $\partial^\alpha u$. 
Suppose $u\in L^p(\Omega)$ and all weak derivatives $\partial^\alpha u$ exist for $|\alpha|=\alpha_1+\cdots+\alpha_n \leq k$ , such that $\partial^\alpha u\in L^p(\Omega)$ for $|\alpha|\leq k$, then we denote $u\in W^{k,p}(\Omega)$ to be the standard Sobolev space. The corresponding norm of $W^{k,p}(\Omega)$ is :
$$\| u\|_{W^{k,p}(\Omega)}=\left(\sum_{|\alpha|\leq k}\int_{\Omega}|\partial^\alpha u|^p\ dx\right)^{\frac{1}{p}}\ .$$

\noindent  For $p=2$ case, we use the convention $H^k(\Omega)$ to denote the space $W^{k,2}(\Omega)$. We often use $D^m u$ to denote any differential operator $D^\alpha u$ for any $|\alpha|=m$: $D^2$ denotes $\partial_{x_i}\partial_{x_j}u$ for $1\leq i , j\leq d$ in particular. 
 
In this paper we use the following convention for Fourier expansion on $\mathbb{T}^d$: 
$$f(x)=\frac{1}{(2\pi)^d}\sum_{k\in\Z^d}\hat{f}(k)e^{ik\cdot x}\ ,\ \widehat{f}(k)=\int_{\Omega}f(x)e^{-ik\cdot x}\ dx\ .$$
Taking advantage of the Fourier expansion, we use the well-known equivalent $H^s$-norm and $\dot{H}^s$-semi-norm of function $f$ by $$\| f\|_{H^s}=\frac{1}{(2\pi)^{d/2}}\left(\sum_{k\in\Z^d}(1+|k|^{2s})|\hat{f}(k)|^2\right)^{\frac12}\ ,\ \| f\|_{\dot{H}^s}=\frac{1}{(2\pi)^{d/2}}\left(\sum_{k\in\Z^d}|k|^{2s}|\hat{f}(k)|^2\right)^{\frac12}. $$
We sometimes adopt the notation $\La =(-\Delta)^{\frac12}$, which can be understood from the Fourier side:
$$\widehat{\La f}(k)=|k|\widehat{f}(k).$$
Therefore $\|f\|_{\dot{H}^s}=\frac{1}{(2\pi)^{d/2}}\|\La^s f\|_{L^2}$.

For the sake of simplicity, in the following sections we shall use the notation $(u, p) $ instead of $(u^\nu, p^\nu)$, and denote $(v,q)$ to be the solution pair to the Euler equations \eqref{EE}.

\subsection{Preliminaries}

The following lemmas are crucial in this paper:
\begin{lem}[Commutator estimate]\label{CL}
Let $s>2$ and $0\leq s_1\leq s$. If $\nabla\cdot f=0$ in $\T^2$, then there exists a constant $C$ such that
$$\|[\Lambda^{s_1}, f\cdot\nabla]g\|_{L^2}\leq C\|f\|_{H^{s}}\|\Lambda^{s_1}g\|_{L^{2}}.$$
\end{lem}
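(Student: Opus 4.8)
The plan is to pass to the Fourier side and read the commutator as a bilinear Fourier multiplier, the borderline nature of the estimate being governed by the convergence of a single lattice sum over $\Z^2$. Since the Fourier multipliers $\Lambda^{s_1}$ and $\partial_j$ commute, one first records $[\Lambda^{s_1},f\cdot\nabla]g=\sum_j[\Lambda^{s_1},f_j]\partial_j g$, so that, writing $l$ for the frequency of $f$, $m=k-l$ for the frequency of $g$ and $k$ for the output frequency,
\[
\widehat{[\Lambda^{s_1},f\cdot\nabla]g}(k)=\sum_{j}\sum_{l}\big(|k|^{s_1}-|m|^{s_1}\big)\,i\,m_j\,\hat f_j(l)\,\hat g(m).
\]
The term $m=0$ vanishes, and the divergence-free hypothesis $\sum_j l_j\hat f_j(l)=0$ lets me add any scalar multiple of $l_j$ to the vector factor $m_j$ without changing the sum; choosing it optimally I may replace $m_j$ by $m_j+\alpha l_j$ and thereby bound the vector factor by $\min(|m|,|k|)$. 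This symmetrization is the only place where $\nabla\cdot f=0$ is used, and it is what controls the regime in which $f$ carries the high frequency.

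With this in hand the estimate reduces to a Schur-type bound for the kernel
\[
K(k,l)=\frac{\big|\,|k|^{s_1}-|m|^{s_1}\,\big|\;\min(|m|,|k|)}{\langle l\rangle^{s}\,|m|^{s_1}},\qquad m=k-l .
\]
Indeed, putting $F(l)=\langle l\rangle^{s}|\hat f(l)|$ and $G(m)=|m|^{s_1}|\hat g(m)|$, we have $\|F\|_{\ell^2}\sim\|f\|_{H^s}$, $\|G\|_{\ell^2}\sim\|\Lambda^{s_1}g\|_{L^2}$, and $\big|\widehat{[\Lambda^{s_1},f\cdot\nabla]g}(k)\big|\le\sum_l K(k,l)F(l)G(k-l)$; a Cauchy–Schwarz in $l$ followed by summation in $k$ (Plancherel) gives
\[
\|[\Lambda^{s_1},f\cdot\nabla]g\|_{L^2}\lesssim\Big(\sup_k\sum_l K(k,l)^2\Big)^{1/2}\|f\|_{H^s}\,\|\Lambda^{s_1}g\|_{L^2}.
\]
Thus it suffices to prove $\sup_k\sum_l K(k,l)^2<\infty$.

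To bound the kernel sum I would split $\Z^2$ by the size of $|m|=|k-l|$ against $|k|$. In the near–diagonal regime $|m|\sim|k|$ the mean value theorem yields $\big|\,|k|^{s_1}-|m|^{s_1}\,\big|\lesssim|k|^{s_1-1}\big|\,|k|-|m|\,\big|\lesssim|k|^{s_1-1}|l|$, so that $K(k,l)^2\lesssim|l|^2\langle l\rangle^{-2s}\sim|l|^{2-2s}$ and $\sum_{l}|l|^{2-2s}<\infty$ exactly when $s>2$. In the regime $|m|\le|k|/2$, where $f$ is at high frequency ($|l|\sim|k|$) and there is no cancellation, one has $\big|\,|k|^{s_1}-|m|^{s_1}\,\big|\sim|k|^{s_1}$ and $\min(|m|,|k|)=|m|$, whence $K(k,l)^2\sim|k|^{2s_1-2s}|m|^{2-2s_1}$; the factor $|m|$ provided by the derivative on $g$, together with $g$ being measured in the homogeneous norm $\dot H^{s_1}$ and the constraint $s_1\le s$, makes $\sum_{0<|m|\le|k|/2}|k|^{2s_1-2s}|m|^{2-2s_1}$ bounded uniformly in $k$ for $s>2$. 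The symmetric region $|m|\ge2|k|$ is treated identically using $\min(|m|,|k|)=|k|$.

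The crux of the argument is precisely the high–frequency–$f$ regime: a blunt application of a Kato–Ponce commutator estimate there would produce the term $\|\Lambda^{s_1}f\|_{L^2}\|\nabla g\|_{L^\infty}$, and $\|\nabla g\|_{L^\infty}$ is not dominated by $\|\Lambda^{s_1}g\|_{L^2}$ for small $s_1$; the role of the computation above is to absorb this loss into the derivative falling on $g$ and the homogeneous $\dot H^{s_1}$–norm, while the divergence-free symmetrization keeps all derivatives off the high frequency of $f$. Note that $s>2$ enters twice in the same way: it is the threshold for $\sum_{l\in\Z^2}|l|^{2-2s}<\infty$ above, and it is the Sobolev threshold $H^s\hookrightarrow W^{1,\infty}$ in two dimensions that underlies $\|\nabla f\|_{L^\infty}\le C_s\|f\|_{H^s}$ (cf. Remark~\ref{rem1.1}) in the complementary low–frequency–$f$ regime; this fixes the constant $C=C(s)$. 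The same result can alternatively be obtained through the Bony paraproduct decomposition, where the low–high paraproduct furnishes the genuine commutator gain and the remaining pieces are estimated after rewriting $f\cdot\nabla g=\nabla\cdot(fg)$.
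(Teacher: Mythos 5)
Your argument is correct, but it takes a genuinely different route from the paper, which offers no proof of Lemma~\ref{CL} at all: it simply cites Bahouri--Chemin--Danchin \cite{BCD11}, where commutator estimates of this type are derived via Littlewood--Paley decomposition and Bony's paraproduct calculus. You instead give a short, self-contained lattice-Fourier proof adapted to the torus: reduce to the bilinear symbol $(|k|^{s_1}-|m|^{s_1})\,m_j$, use $\sum_j l_j\hat f_j(l)=0$ to trade $m_j$ for $k_j$, and close with a Schur/Cauchy--Schwarz test, $\|c\|_{L^2}^2\lesssim \bigl(\sup_k\sum_l K(k,l)^2\bigr)\|F\|_{\ell^2}^2\|G\|_{\ell^2}^2$, which is valid with your splitting $K\cdot(FG)$. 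I verified the three regimes: near the diagonal the mean-value bound gives $K^2\lesssim \langle l\rangle^{2-2s}$, summable on $\Z^2$ exactly for $s>2$; for $|m|\le |k|/2$ one gets $|k|^{2s_1-2s}\sum_{0<|m|\le|k|/2}|m|^{2-2s_1}\lesssim |k|^{4-2s}$ when $s_1<2$, $|k|^{4-2s}\log|k|$ at $s_1=2$, and $|k|^{2s_1-2s}\le 1$ via $s_1\le s$ when $s_1>2$; the regime $|m|\ge 2|k|$ gives $|k|^2\sum_{|m|\ge 2|k|}\langle m\rangle^{-2s}\lesssim |k|^{4-2s}$. What your route buys is elementarity and a transparent view of the threshold $s>2=d/2+1$ in two dimensions; what the paraproduct route in \cite{BCD11} buys is generality ($\R^d$, Besov endpoints, refinements with $\|\nabla f\|$ in place of $\|f\|_{H^s}$). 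One correction to your commentary rather than to your estimates: in the high-frequency-$f$ regime $|m|\le|k|/2$ one has $\min(|m|,|k|)=|m|$, which is already the trivial bound $|m_j|\le|m|$, so the divergence-free symmetrization contributes nothing there; it only improves the opposite regime $|m|\ge 2|k|$, and even there your own computation ($K\lesssim |k|\langle m\rangle^{-s}$ with $|l|\sim|m|$, or $\langle m\rangle^{1-s}$ without symmetrizing) closes for $s>2$ regardless. So your proof in fact establishes the inequality essentially without $\nabla\cdot f=0$; the claim that the symmetrization ``controls the regime in which $f$ carries the high frequency'' is a misattribution in the prose, not a gap in the proof.
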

\begin{proof}
    The proof can be found in \cite{BCD11}. 
\end{proof}

\begin{lem}[Inviscid limit]\label{Mas}
    Assume $(u,p)$ is the solution pair to the NS system \eqref{NSE} and $(v,q)$ is the solution pair to the Euler system \eqref{EE} with the same initial data $u_0\in H^s(\T^2)$ for any $s>2$. Then we have 
    \begin{equation*}
        \|u-v\|_{H^{s-2}}\le C\nu,\quad \|u-v\|_{H^s}\le C(\|\Pi_N u_0-u_0\|_{H^s}+N^2\nu),
    \end{equation*}
    for some constant $C>0$ depending only on $\|u_0\|_{H^s}$.
\end{lem}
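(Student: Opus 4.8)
The starting point is the equation for the difference $w=u-v$, namely
\[
\partial_t w + u\cdot\nabla w + w\cdot\nabla v + \nabla(p-q) = \nu\Delta w + \nu\Delta v,\qquad \nabla\cdot w = 0,\quad w|_{t=0}=0.
\]
Throughout I would use the uniform-in-$\nu$ bounds $\|u\|_{L^\infty_T H^s}+\|v\|_{L^\infty_T H^s}\le C$ on a common interval $[0,T]$ (propagation of regularity by the energy method built on Lemma~\ref{CL}, the viscous term only helping), together with the fact that $w$ has zero spatial mean since the mean is conserved by both flows. For the first, lossy bound I would pair the difference equation with $\langle\nabla\rangle^{2(s-2)}w$: the pressure drops by $\nabla\cdot w=0$, the dissipation $\nu\|w\|_{\dot{H}^{s-1}}^2$ has a good sign, and the two transport terms are absorbed into $C\|w\|_{H^{s-2}}^2$ using Lemma~\ref{CL} with $s_1=s-2\in[0,s]$ for the first and the product bound $\|w\cdot\nabla v\|_{H^{s-2}}\lesssim\|w\|_{H^{s-2}}\|v\|_{H^s}$ (valid since $H^{s-1}(\T^2)$ is an algebra embedding in $L^\infty$ for $s>2$) for the second. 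The only forcing is $\nu\langle\Lambda^{s-2}\Delta v,\Lambda^{s-2}w\rangle\le C\nu\|v\|_{\dot{H}^s}\|w\|_{H^{s-2}}$, where the two spare derivatives are exactly supplied by $v\in H^s$; Grönwall with $w(0)=0$ then yields $\|w(t)\|_{H^{s-2}}\le C\nu$.

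At top order this computation fails, since the same forcing would require $v\in H^{s+2}$. To bypass this I would introduce the auxiliary solutions $u_N,v_N$ of \eqref{NSE} and \eqref{EE} with the band-limited datum $\Pi_N u_0$, and split
\[
\|u-v\|_{H^s}\le\|u-u_N\|_{H^s}+\|u_N-v_N\|_{H^s}+\|v_N-v\|_{H^s}.
\]
For the middle term, $\Pi_N u_0$ is smooth with $\|\Pi_N u_0\|_{H^{s+2}}\le CN^2\|u_0\|_{H^s}$; since the growth of any higher Sobolev norm of an Euler (or NS) solution is driven only by its Lipschitz, hence $H^s$, norm, which is uniformly bounded on $[0,T]$, propagation of regularity gives $\|v_N\|_{L^\infty_T H^{s+2}}\le CN^2$. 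Rerunning the lossy estimate for $u_N-v_N$ (which has zero initial difference) now costs only $\nu\|v_N\|_{\dot{H}^{s+2}}\le C\nu N^2$ in the forcing, so Grönwall gives $\|u_N-v_N\|_{H^s}\le C\nu N^2$.

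The two outer terms cannot be treated by a direct $H^s$ Grönwall, because the solution maps of \eqref{EE} and \eqref{NSE} are not Lipschitz in $H^s$: a naive estimate would expose the factor $\|v_N\|_{\dot{H}^{s+1}}\sim N$ and produce an unusable $e^{CNT}$. The remedy is a two-tier Bona--Smith argument. First, in the lower norm $H^{s-1}$ (where $H^{s-1}(\T^2)$ is an algebra, so no derivative is lost) one gets the Lipschitz bound $\|v_N-v\|_{H^{s-1}}\le C\|(I-\Pi_N)u_0\|_{H^{s-1}}\le CN^{-1}\|\Pi_N u_0-u_0\|_{H^s}$, and likewise for $u_N-u$ uniformly in $\nu$. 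Then in $\dot{H}^s$ the dangerous contribution is controlled by $\|v_N-v\|_{L^\infty}\|v_N\|_{\dot{H}^{s+1}}\|v_N-v\|_{\dot{H}^s}\lesssim (N^{-1}\|\Pi_N u_0-u_0\|_{H^s})\cdot N\cdot\|v_N-v\|_{\dot{H}^s}$, in which the $N$ and $N^{-1}$ cancel. Grönwall then yields $\|v_N-v\|_{H^s}+\|u_N-u\|_{H^s}\le C\|\Pi_N u_0-u_0\|_{H^s}$, and summing the three pieces gives the second estimate.

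The main obstacle is precisely this loss of two derivatives in the viscous forcing at top order, which can only be absorbed after localizing to frequencies $\le N$, thereby converting the unavailable regularity of $v$ into the factor $N^2$ and relegating the remainder to the high-frequency tail $\|\Pi_N u_0-u_0\|_{H^s}$. The secondary difficulty is the failure of $H^s$-Lipschitz dependence for both flows; the Bona--Smith cancellation $N\cdot N^{-1}=1$ is exactly what keeps every constant polynomial in $N$ and uniform in $\nu$, which is what makes the stated bound usable in the inviscid limit $\nu\ll N^{-2}$.
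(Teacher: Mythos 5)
Your proposal follows essentially the same route as the paper's own proof in Appendix~\ref{appB}: the paper mollifies the data by the sharp Fourier cutoff $u^\sigma_0=\mathcal{F}^{-1}(\mathcal{I}_{|\xi|\le 1/\sigma}\mathcal{F}u_0)$ with $\sigma=1/N$ (i.e.\ your $\Pi_N u_0$), splits $u-v$ into $(u-u^\sigma)+(u^\sigma-v^\sigma)+(v^\sigma-v)$, and runs exactly your two-tier Bona--Smith scheme for the outer pieces: a lower-norm $H^{k-1}$ Lipschitz bound in which $\|u^\sigma_0-u_0\|_{H^{k-1}}\lesssim \sigma\|u^\sigma_0-u_0\|_{H^{k}}$ cancels $\|u^\sigma\|_{H^{k+1}}\lesssim \sigma^{-1}$, followed by the top-order Gr\"onwall. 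The low-regularity bound $\|u-v\|_{H^{s-2}}\le C\nu$ is also proved exactly as you do, with the viscous forcing $\nu\Delta v$ paying two derivatives out of $v\in H^s$.

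One step of your write-up is glossed, and it is the one place where the paper is more careful. For the middle piece $w_N=u_N-v_N$ you assert that rerunning the lossy estimate at level $H^s$ ``costs only $\nu\|v_N\|_{\dot{H}^{s+2}}\le C\nu N^2$ in the forcing.'' But the transport absorption is no longer free at top order: the stretching term obeys only the tame bound
\begin{equation*}
\|w_N\cdot\nabla v_N\|_{H^s}\lesssim \|w_N\|_{H^s}\|\nabla v_N\|_{L^\infty}+\|w_N\|_{L^\infty}\|v_N\|_{H^{s+1}},
\end{equation*}
whose second summand carries $\|v_N\|_{H^{s+1}}\sim N$; estimating $\|w_N\|_{L^\infty}$ by $\|w_N\|_{H^s}$ would put a factor $N$ inside Gr\"onwall and produce an unusable $e^{CNT}$, while a verbatim rerun of your $H^{s-2}$ multiplier bound is worse still, giving $\|w_N\|_{H^s}\|v_N\|_{H^{s+2}}\sim N^2\|w_N\|_{H^s}$. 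So the middle piece also needs the two-tier argument you reserve for the outer pieces: first an $H^{s-1}$ estimate, where the viscous forcing $\nu\|v_N\|_{H^{s+1}}$ yields $\|w_N\|_{H^{s-1}}\lesssim \nu N$, and then $\|w_N\|_{L^\infty}\|v_N\|_{H^{s+1}}\lesssim (\nu N)\cdot N=\nu N^2$, which matches the order of the viscous forcing. This is precisely the paper's chain $\|w^\sigma\|_{H^{k-1}}\lesssim \nu/\sigma$ feeding the term $\|w^\sigma\|_{H^{k-1}}\|v^\sigma\|_{H^{k+1}}\|w^\sigma\|_{H^k}$ in the subsequent $H^k$ estimate. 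Since you already deploy this mechanism twice, the fix is immediate; but as written, the Gr\"onwall constant for the middle term is not uniform in $N$, and the claimed $C\nu N^2$ does not follow from the forcing term alone.
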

\begin{proof}
    We refer the readers to \cite{M07} for the proof. An alternative proof can be found in Appendix~\ref{appB}.
\end{proof}


\section{Unconditional stability of the semi-implicit scheme}\label{sec3}
In this section we shall show the unconditional stability of the semi-implicit scheme Theorem~\ref{Thm1.1}. Recall that the semi-implicit scheme
\begin{equation}\label{Semischeme1}
\begin{cases}
&\frac{u^{n+1}-u^n}{\tau}+\Pi_N \left(u^{n}\cdot \na u^{n+1}\right)+\na p^n= \nu \Delta u^{n+1}, \quad \nabla\cdot u^{n+1}=0,\\
&u^0=\Pi_N u_0.
\end{cases}
\end{equation}
where $\tau$ is the size of the time step and we define $M= \frac{T}{\tau}$, the total number of steps; $\Pi_N$ is the Fourier truncation operator such that $|k|\le N$.

To solve $u^{n+1}$, we apply the Leray projection $\LP$  to  the equation \eqref{Semischeme1} to obtain that 
\begin{equation}\label{Semischeme2}
\frac{u^{n+1}-u^n}{\tau} +\Pi_N\LP(u^n \cdot \na u^{n+1})=\nu \Delta u^{n+1}.
\end{equation}
Then we multiply \eqref{Semischeme2} by $u^{n+1}$ and integrate. Note that $\Pi_N u^{n+1}= u^{n+1}$ and $\LP u^{n+1}=u^{n+1}$ by the Fourier truncation and divergence free condition. Therefore we have
\begin{align}\label{tran}
\Lg\Pi_N\LP(u^n\cdot \na u^{n+1}),u^{n+1}\Rg=\Lg \LP(u^n\cdot \na u^{n+1}),\Pi_N u^{n+1}\Rg =\Lg u^n\cdot \na u^{n+1},u^{n+1}\Rg=0.
\end{align}
From \eqref{Semischeme2} and \eqref{tran}, it is then not difficult to obtain that
\begin{equation*}
\begin{aligned}
     \frac{1}{2\tau} \|u^{n+1}\|^2_{L^2}  + \frac{1}{2\tau}\|u^{n+1}-u^{n}\|^2_{L^2} +\nu \|\nabla u^{n+1}\|^2_{L^2}=\frac{1}{2\tau} \|u^n\|^2_{L^2},
\end{aligned}
\end{equation*}
which leads to the uniform $L^2$-energy dissipation $\|u^{n+1}\|_{L^2}\le \|u^n\|_{L^2}$ \eqref{1.6}.

For the uniform $H^s$ bound with $s>2$, we deduce 
from Lemma \ref{CL} that
\begin{equation*}
\begin{aligned}
    &\frac{1}{2\tau}\| \Lambda^s u^{n+1}\|^2_{L^2} + \frac{1}{2\tau}\|\Lambda^s( u^{n+1}-u^{n})\|^2_{L^2}+\nu \| \Lambda^s \nabla u^{n+1}\|^2_{L^2} \\
    \le& \frac{1}{2\tau} \|\Lambda^s u^n \|^2_{L^2} + |\Lg \Pi_N\LP  \Lambda^s (u^n\cdot \nabla u^{n+1}) , \Lambda^s u^{n+1}\Rg| \\
 \le&\frac{1}{2\tau} \|\Lambda^s u^n \|^2_{L^2} + |\Lg \LP  \Lambda^s (u^n\cdot \nabla u^{n+1}) , \Pi_N\Lambda^s u^{n+1}\Rg|\nonumber\\
    \leq&\frac{1}{2\tau} \|\Lambda^s u^n \|^2_{L^2} + |\Lg [\Lambda^s, u^n\cdot\nabla] u^{n+1} , \Lambda^s u^{n+1}\Rg|  + |\Lg (u^n\cdot \nabla \Lambda^s u^{n+1}) , \Lambda^s u^{n+1}\Rg| \\
    \leq& \frac{1}{2\tau} \|\Lambda^s u^n \|^2_{L^2} + |\Lg [\Lambda^s, u^n\cdot\nabla] u^{n+1} , \Lambda^s u^{n+1}\Rg|\\
    \leq& \frac{1}{2\tau} \|\Lambda^s u^n \|^2_{L^2} + C_s\| u^n\|_{H^s}\|\Lambda^s u^{n+1}\|^2_{L^2} .
\end{aligned}
\end{equation*}
where $C_s>0$ only depends on $s>2$. By Remark~\ref{rem1.1}, we can assume $T= \frac{1}{4 C_s\|u_0\|_{H^s}}$ from the local theory and $\|u^n\|_{H^s} \leq 4 \|u_0\|_{H^s}$. We thus get that
\begin{equation*}
\begin{aligned}
   \|u^{n+1}\|^2_{H^s} &\leq \|u^n\|^2_{H^s} + C_s\tau \|u^n\|_{H^s}\|u^{n+1}\|^2_{H^s}\\
   &\leq \|u^n\|^2_{H^s} + 4C_s\frac{T}{M} \|u_0\|_{H^s}\|u^{n+1}\|^2_{H^s}\\
   &\leq \|u^n\|^2_{H^s} + \frac{1}{M} \|u^{n+1}\|^2_{H^s},
\end{aligned}
\end{equation*}
which implies that
\begin{align*}
\|u^{n+1}\|^2_{H^s} \leq (1+h)\|u^n\|^2_{H^s},
\end{align*}
where $h:= \frac{1}{M-1}$. Iterating the inequality above we can complete the proof: 
\begin{align*}
    \|u^{n+1}\|^2_{H^s} &\leq (1+h)^{n+1}\|u_0\|^2_{H^s}\nonumber\\
    &\leq (1+h)^{M+1}\|u_0\|^2_{H^s}\nonumber\\
    &\leq (1+h)^{\frac{1}{h}+2}\|u_0\|^2_{H^s}\nonumber\\
    &\leq 16\|u_0\|^2_{H^s}.
\end{align*}
This then proves \eqref{1.7}.

\section{Error estimates via the vanishing viscosity limit }\label{sec4}
In this section we shall prove the error estimates Theorem~\ref{Thm1.3}. Firstly we apply the Masmoudi's inviscid limit energy estimates (cf. \cite{M07} or Appendix~\ref{appB} for an alternative proof).  To be more specific, we apply \eqref{LV} to obtain that
\begin{align}\label{SISV}
    \|u^{n+1} -v \left( t_{n+1}\right)\|_{L^{2}}&\leq C\|u^{n+1}-u\left(t_{n+1}\right)\|_{L^{2}}+\|u-v\|_{L^{2}}\nonumber\\
    &\leq C\|u^{n+1}-u\left(t_{n+1}\right)\|_{L^{2}} + C\nu ,
\end{align}
where $u(t)$ is the exact solution to the NS equations \eqref{NSE} and $v(t)$ is the exact solution to the Euler equations \eqref{EE}. Then by the fundamental theorem of calculus, we have
\begin{align*}
    u\left(t_{n+1}\right)&=u\left(t_{n}\right)+\int^{t_{n+1}}_{t_n} \partial_{t‘}u dt'\nonumber\\
    &= u\left(t_{n}\right)+\int^{t_{n+1}}_{t_n}  - \LP(u \cdot \na u) + \nu \De u dt'.
\end{align*}
This together with \eqref{Semischeme2} imply that 
\begin{align*}
    u^{n+1}-u\left(t_{n+1}\right) &=u^n - \tau\Pi_N\LP(u^{n} \cdot \na u^{n+1}) + \tau \nu \De u^{n+1} -u\left(t_n\right) +\int^{t_{n+1}}_{t_n}   \LP(u \cdot \na u) - \nu \De u dt'\nonumber\\
    &=u^n-u\left(t_n\right)+ \nu \int^{t_{n+1}}_{t_n} \De u^{n+1}- \De u dt' +  \int^{t_{n+1}}_{t_n} \Pi_N\LP(u \cdot \na u - u^{n} \cdot \na u^{n+1})dt'\nonumber\\
    &\quad+ \int^{t_{n+1}}_{t_n} \left(\mathcal{I}-\Pi_N\right)\LP(u \cdot \na u)dt'.
\end{align*}
Then, we consider the $L^2$-estimates for $u^{n+1}-u\left(t_{n+1}\right)$.
\begin{equation}\label{4.2}
\begin{aligned}
    &\Lg u^{n+1}-u\left(t_{n+1}\right), u^{n+1}-u\left(t_{n+1}\right)\Rg \\&= \Lg u^{n}-u\left(t_{n}\right), u^{n+1}-u\left(t_{n+1}\right)\Rg\\
    &\quad + \nu \int^{t_{n+1}}_{t_n} \Lg \De u^{n+1}- \De u , u^{n+1}-u\left(t_{n+1}\right) \Rg dt' \\
    &\quad+  \int^{t_{n+1}}_{t_n} \Lg \Pi_N\LP(u \cdot \na u - u^{n} \cdot \na u^{n+1}), u^{n+1}-u\left(t_{n+1}\right) \Rg dt'\\
    &\quad+ \int^{t_{n+1}}_{t_n}  \Lg \left(\mathcal{I}-\Pi_N\right)\LP(u \cdot \na u), u^{n+1}-u\left(t_{n+1}\right)  \Rg dt' \\
    &:= I_1 + I_2 + I_3 + I_4.
    \end{aligned}
    \end{equation}
It is easy to check that
\begin{align}\label{4.3}
    I_1 \leq \|u^{n+1}-u\left(t_{n+1}\right)\|_{L^2}\|u^{n}-u\left(t_{n}\right)\|_{L^2}
\end{align}
For $I_2$, we have
\begin{align*}
    I_2 &=\nu \int^{t_{n+1}}_{t_n} \Lg \De u^{n+1}- \De u\left(t_{n+1}\right), u^{n+1}-u\left(t_{n+1}\right) \Rg dt'\nonumber\\
    &\quad+ \nu \int^{t_{n+1}}_{t_n} \Lg \De u\left(t_{n+1}\right)- \De u, u^{n+1}-u\left(t_{n+1}\right) \Rg dt' \nonumber\\
    &\coloneqq I_{2,1}+I_{2,2}.
\end{align*}
Integrating by parts and using the uniform estimates \eqref{UE}, we obtain
\begin{align*}
    I_{2,1} &= -\nu \int^{t_{n+1}}_{t_n} \Lg \nabla \left(u^{n+1}-  u\left(t_{n+1}\right)\right),  \nabla \left( u^{n+1}- u\left(t_{n+1}\right) \right)\Rg  dt'\nonumber\\
    &= -\nu \tau \| \nabla \left( u^{n+1}- u\left(t_{n+1}\right) \right) \|^2_{L^2},
\end{align*}
and 
\begin{align*}
     I_{2,2} &= -\nu \int^{t_{n+1}}_{t_n} \Lg \nabla \left(  u\left(t_{n+1}\right)-u\right),  \nabla \left( u^{n+1}- u\left(t_{n+1}\right) \right)\Rg  dt'\nonumber\\
     &\leq \frac{1}{2} \nu \tau \| \nabla \left( u^{n+1}- u\left(t_{n+1}\right) \right)\|^2_{L^2} + \frac{1}{2} \nu \int^{t_{n+1}}_{t_n} \|\nabla u\left(t_{n+1}\right)-  \nabla u \|^2_{L^2} dt'\nonumber\\
     &\leq \frac{1}{2} \nu \tau \| \nabla \left( u^{n+1}- u\left(t_{n+1}\right) \right)\|^2_{L^2} + \frac{1}{2} \nu  \int^{t_{n+1}}_{t_n} \|\int^{t_{n+1}}_{t'} \partial_t \nabla u dl \|^2_{L^2} dt'\nonumber\\
     & \leq \frac{1}{2} \nu \tau \| \nabla \left( u^{n+1}- u\left(t_{n+1}\right) \right)\|^2_{L^2} + \frac{1}{2} \nu  \int^{t_{n+1}}_{t_n} \left( \int^{t_{n+1}}_{t'} \| \partial_t \nabla u  \|_{L^2} dl \right)^2 dt' \nonumber\\
     &\leq \frac{1}{2} \nu \tau \| \nabla \left( u^{n+1}- u\left(t_{n+1}\right) \right)\|^2_{L^2}+ \nu \tau^3 \left( \|\nabla \left( u\cdot \nabla u\right)\|^2_{L^\infty_tL^2} + \nu^2 \|\nabla \De u\|^2_{L^\infty_tL^2}\right) \nonumber\\
     &\leq \frac{1}{2} \nu \tau \| \nabla \left( u^{n+1}- u\left(t_{n+1}\right) \right)\|^2_{L^2} + C \nu \tau^3(\|u_0\|^2_{H^3}+\nu^2) \|u_0\|^2_{H^3}.
\end{align*}
It then follows that
\begin{align}\label{4.4}
     I_{2} \leq -\frac{1}{2} \nu \tau \| \nabla \left( u^{n+1}- u\left(t_{n+1}\right) \right)\|^2_{L^2} + C \nu \tau^3(\|u_0\|^2_{H^3}+\nu^2) \|u_0\|^2_{H^3}.
\end{align}
Notice that we have
\begin{equation}
\begin{aligned}\label{chaxiang}
    u\cdot\nabla u - u^n\cdot\nabla u^{n+1}&= u\cdot \nabla u - u\left(t_n\right)\cdot\nabla u\\
    &\quad+ u\left(t_n\right)\cdot\nabla u - u\left(t_n\right)\cdot\nabla u\left(t_{n+1}\right)\\
    &\quad+u\left(t_n\right)\cdot\nabla u\left(t_{n+1}\right)-u^n\cdot\nabla u\left(t_{n+1}\right)\\
    &\quad+ u^n\cdot\nabla u\left(t_{n+1}\right) - u^n\cdot\nabla u^{n+1}.
\end{aligned}
\end{equation}
Then we can rewrite $I_3$ as follows
\begin{align*}
    I_3&= \int^{t_{n+1}}_{t_n} \Lg  \Pi_N\LP \left ( \left( u-u\left(t_n\right)\right)\cdot \nabla u\right), u^{n+1}-u\left(t_{n+1}\right) \Rg dt'\nonumber\\
    &\quad+\int^{t_{n+1}}_{t_n} \Lg  \Pi_N\LP \left ( u\left(t_n\right)\cdot \nabla \left(u-u\left(t_{n+1}\right)\right)\right), u^{n+1}-u\left(t_{n+1}\right) \Rg dt' \nonumber\\
    &\quad+ \int^{t_{n+1}}_{t_n} \Lg  \Pi_N\LP \left ( \left( u\left(t_n\right)-u^n\right)\cdot \nabla u\left(t_{n
    +1}\right)\right), u^{n+1}-u\left(t_{n+1}\right) \Rg dt'\nonumber\\
    &\quad+  \int^{t_{n+1}}_{t_n}   \Lg  \Pi_N\LP \left (  u^n \cdot \nabla \left (u\left(t_{n
    +1}\right)-u^{n+1}\right)\right), u^{n+1}-u\left(t_{n+1}\right) \Rg dt'\nonumber\\
    &:= I_{3,1} + I_{3,2} + I_{3,3} + I_{3,4}
\end{align*} 
Using \eqref{NSE} and the uniform estimates \eqref{UE}, we have
\begin{align*}
    I_{3,1} &\leq \tau \|u-u\left(t_n\right)\|_{L^\infty_tL^2}\|u\|_{L^\infty_tH^3}\|u^{n+1}-u\left(t_{n+1}\right)\|_{L^2}\nonumber\\
    &\leq C \tau \|\int_{t_n}^{t} \partial_t u  dt'\|_{L^\infty_tL^2}\|u_0\|_{H^3}\|u^{n+1}-u\left(t_{n+1}\right)\|_{L^2}\nonumber\\
    &\leq C \tau^2 \|u_0\|_{H^3}\|u^{n+1}-u\left(t_{n+1}\right)\|_{L^2} \left( \|u\cdot \nabla u\|_{L^\infty_tL^2} + \nu\|\De u\|_{L^\infty_tL^2}\right)\nonumber\\
    &\leq C \tau^2 \|u_0\|_{H^3}\|u^{n+1}-u\left(t_{n+1}\right)\|_{L^2}\left(\|u_0\|^2_{H^2} + \nu\|u_0\|_{H^2} \right),
\end{align*}
and
\begin{align*}
I_{3,2} &\leq \tau  \|u\left(t_n\right)\|_{H^2}\|\nabla \left( u-u\left(t_{n+1}\right)\right)\|_{L^\infty_tL^2}\|u^{n+1}-u\left(t_{n+1}\right)\|_{L^2}\nonumber\\
&\leq C\tau \|u_0\|_{H^2}\|\int_{t}^{t^{n+1}} \partial_t u dt'\|_{L^\infty_tL^2}\|u^{n+1}-u\left(t_{n+1}\right)\|_{L^2}\nonumber\\
&\leq C \tau^2\|u_0\|_{H^2}\|u^{n+1}-u\left(t_{n+1}\right)\|_{L^2}\left(\|\nabla \left(u\cdot\nabla u\right)\|_{L^\infty_tL^2}+\nu\|\nabla \De u\|_{L^\infty_tL^2}\right)\nonumber\\
&\leq C\tau^2\|u_0\|_{H^2}\|u^{n+1}-u\left(t_{n+1}\right)\|_{L^2}\left( \|u_0\|^2_{H^3} + \nu \|u_0\|_{H^3} \right).
\end{align*}
Similarly, we infer that
\begin{align*}
    I_{3,3} &\leq \tau \|u^{n}-u\left(t_{n}\right)\|_{L^2} \|u^{n+1}-u\left(t_{n+1}\right)\|_{L^2}\|u\left(t_{n+1}\right)\|_{H^3}\nonumber\\
    &\leq C\tau \|u^{n}-u\left(t_{n}\right)\|_{L^2} \|u^{n+1}-u\left(t_{n+1}\right)\|_{L^2} \|u_0\|_{H^3},
\end{align*}
and
\begin{align*}
    I_{3, 4}&= \int^{t_{n+1}}_{t_n}   \Lg  \Pi_N\LP \left (  u^n \cdot \nabla \left (u\left(t_{n
    +1}\right)-u^{n+1}\right)\right), u^{n+1}-u\left(t_{n+1}\right) \Rg dt'\nonumber\\
    &=\int^{t_{n+1}}_{t_n}\Lg   \left (  u^n \cdot \nabla \left (u\left(t_{n
    +1}\right)-u^{n+1}\right)\right),  \left( u^{n+1}- \Pi_N u\left(t_{n+1}\right) \right)\Rg dt'\nonumber\\
    &=\int^{t_{n+1}}_{t_n} \Lg \left (  u^n \cdot \nabla \left (u\left(t_{n
    +1}\right)-u^{n+1}\right)\right),   u^{n+1}- u\left(t_{n+1}\right) \Rg dt'\nonumber\\
    &\quad+\int^{t_{n+1}}_{t_n} \Lg \left (  u^n \cdot \nabla \left (u\left(t_{n
    +1}\right)-u^{n+1}\right)\right),  \left(\mathcal{I}-\Pi_N \right)u\left(t_{n+1}\right) \Rg dt'\nonumber\\
    &=\int^{t_{n+1}}_{t_n} \Lg u^{n+1}-u\left(t_{n
    +1}\right),  u^n \cdot  \nabla\left(\mathcal{I}-\Pi_N \right)u\left(t_{n+1}\right) \Rg dt' \nonumber\\
    &\leq \tau \|u^{n+1}-u\left(t_{n
    +1}\right)\|_{L^2}\|u^n\|_{H^2}\|\nabla \left(\mathcal{I}-\Pi_N \right)u\left(t_{n+1}\right) \|_{L^2} \nonumber\\
    &\leq C\tau N^{-s+1}\|u^{n+1}-u\left(t_{n
    +1}\right)\|_{L^2}\|u_0\|_{H^2}\|u\|_{H^{s}}\nonumber\\
    &\leq C\tau  N^{-s+1}\|u^{n+1}-u\left(t_{n
    +1}\right)\|_{L^2}\|u_0\|_{H^2}\|u_0\|_{H^{s}}.
\end{align*}
Therefore we can conclude that
\begin{equation}\label{4.6}
    \begin{aligned}
        I_3\le  &C \tau^2 \|u_0\|_{H^3}\|u^{n+1}-u\left(t_{n+1}\right)\|_{L^2}\left(\|u_0\|^2_{H^2} + \nu\|u_0\|_{H^2} \right)\\
        &+C\tau^2\|u_0\|_{H^2}\|u^{n+1}-u\left(t_{n+1}\right)\|_{L^2}\left( \|u_0\|^2_{H^3} + \nu \|u_0\|_{H^3} \right)\\
        & +C\tau \|u^{n}-u\left(t_{n}\right)\|_{L^2} \|u^{n+1}-u\left(t_{n+1}\right)\|_{L^2} \|u_0\|_{H^3}\\
        & +C\tau  N^{-s+1}\|u^{n+1}-u\left(t_{n
+1}\right)\|_{L^2}\|u_0\|_{H^2}\|u_0\|_{H^{s}}.
    \end{aligned}
\end{equation}
Furthermore we obtain that
\begin{equation}\label{4.7}
\begin{aligned}
    I_{4}&\leq \tau \|u^{n+1}-u\left(t_{n+1}\right)\|_{L^2}\|\left(\mathcal{I}-\Pi_N\right)(u \cdot \na u)\|_{L^\infty_tL^2}\\
    &\leq C\tau N^{-s+1}\|u_0\|^2_{H^{s}}\|u^{n+1}-u\left(t_{n+1}\right)\|_{L^2}.
\end{aligned}
\end{equation}
Based on the estimates \eqref{4.2}-\eqref{4.7} above we finally conclude that
\begin{align*}
    \|u^{n+1}-u\left(t_{n+1}\right)\|^2_{L^2} &\leq  \frac{1}{2}\|u^{n+1}-u\left(t_{n+1}\right)\|^2_{L^2} + \frac{1}{2}\|u^{n}-u\left(t_{n}\right)\|^2_{L^2}\nonumber\\
    &\quad-\frac{1}{2} \nu \tau \| \nabla \left( u^{n+1}- u\left(t_{n+1}\right) \right)\|^2_{L^2} + C \nu \tau^3 (\|u_0\|^2_{H^3} + \nu^2) \|u_0\|^2_{H^3}\nonumber\\
    &\quad+C \tau^2 \|u_0\|_{H^3}\|u^{n+1}-u\left(t_{n+1}\right)\|_{L^2}\left(\|u_0\|^2_{H^3} + \nu\|u_0\|_{H^3} \right)\nonumber\\
    &\quad+C\tau\|u^{n}-u\left(t_{n}\right)\|_{L^2} \|u^{n+1}-u\left(t_{n+1}\right)\|_{L^2} \|u_0\|_{H^3}\nonumber\\
    &\quad+C\tau  N^{-s+1}\|u^{n+1}-u\left(t_{n
    +1}\right)\|_{L^2}\|u_0\|_{H^2}\|u_0\|_{H^{s}}\nonumber\\
    &\quad+C\tau N^{-s+1}\|u_0\|^2_{H^{s}}\|u^{n+1}-u\left(t_{n+1}\right)\|_{L^2},
\end{align*}
which implies that
\begin{equation}\label{4.8}
\begin{aligned}
    \|u^{n+1}-u\left(t_{n+1}\right)\|^2_{L^2} \leq \frac{1+C^* \tau}{1-C^* \tau}  \|u^{n}-u\left(t_{n}\right)\|^2_{L^2} + \frac{C^* (1+\nu^3) \tau^3}{1-C^* \tau} +\frac{C^*\tau N^{-2s+2}}{1-C^*\tau},
\end{aligned}
\end{equation}
where $C^*$  depends on $\|u_0\|_{H^{s}}$.
Iterating the inequality \eqref{4.8} we arrive at 
\begin{align*}
    \|u^{n+1}-u\left(t_{n+1}\right)\|^2_{L^2}&\leq (\frac{1+C^* \tau}{1-C^* \tau})^{M+1}\|u^{0}-u\left(0\right)\|^2_{L^2}+C^*(\tau^2+N^{-2s+2})\nonumber\\
    &\leq C^*(\tau^2+N^{-2s+2}).
\end{align*}
This together with \eqref{SISV} ensures that
\begin{align*}
    \|u^{n+1} -v \left( t_{n+1}\right)\|_{L^{2}}&\leq C\|u^{n+1}-u\left(t_{n+1}\right)\|_{L^{2}}+C\nu \nonumber\\
    &\leq C^*(\tau+N^{-s+1})+ C\nu .
\end{align*}
This then concludes \eqref{1.8}.

For the $H^s$-error estimate, we then recall \eqref{V} that 
\begin{align}\label{SISH}
    \|u^{n+1} -v \left( t_{n+1}\right)\|_{H^{s}}&\leq C\|u^{n+1}-u\left(t_{n+1}\right)\|_{H^{s}}+\|u-v\|_{H^{s}}\nonumber\\
    &\leq C\|u^{n+1}-u\left(t_{n+1}\right)\|_{H^{s}} +  C\|\Pi_N u_0 - u_0\|_{H^{s}}+CN^2\nu.
\end{align}
Finally, we consider the $H^s$-estimates for $u^{n+1}-u\left(t_{n+1}\right)$. Similar to the $L^2$-estimates, we have
\begin{align*}
    \|\Lambda^s \left(u^{n+1}-u\left(t_{n+1}\right)\right)\|_{L^2} &=\Lg\Lambda^s \left( u^{n}-u\left(t_{n}\right)\right), \Lambda^s\left( u^{n+1}-u\left(t_{n+1}\right)\right)\Rg\nonumber\\
    &\quad+ \nu \int^{t_{n+1}}_{t_n} \Lg \Lambda^s \De \left( u^{n+1}-  u \right), \Lambda^s\left( u^{n+1}-u\left(t_{n+1}\right)\right) \Rg dt' \nonumber\\
    &\quad+  \int^{t_{n+1}}_{t_n} \Lg \Pi_N \LP\Lambda^s(u \cdot \na u - u^{n}\cdot \na u^{n+1}), \Lambda^s\left( u^{n+1}-u\left(t_{n+1}\right)\right)\Rg dt'\nonumber\\
    &\quad+ \int^{t_{n+1}}_{t_n} \Lg \left(\mathcal{I}-\Pi_N\right) \LP\Lambda^s(u \cdot \na u ), \Lambda^s\left( u^{n+1}-u\left(t_{n+1}\right)\right)\Rg dt'\nonumber\\
    &:= I^{'}_1 + I^{'}_2 + I^{'}_3 + I^{'}_4.
\end{align*}
One can easily deduce that
\begin{align*}
    I^{'}_1 \leq \|\Lambda^s \left(u^{n+1}-u\left(t_{n+1}\right)\right)\|_{L^2}\|\Lambda^s \left(u^{n}-u\left(t_{n}\right)\right)\|_{L^2}.
\end{align*}
For $I^{'}_2$,  we have
\begin{align*}
    I^{'}_2 &=\nu \int^{t_{n+1}}_{t_n} \Lg \Lambda^s \De \left( u^{n+1}-  u\left(t_{n+1}\right)\right), \Lambda^s \left(u^{n+1}-u\left(t_{n+1}\right)\right) \Rg dt'\nonumber\\
    &\quad+ \nu \int^{t_{n+1}}_{t_n} \Lg \Lambda^s \De \left(u\left(t_{n+1}\right)-  u\right), \Lambda^s \left(u^{n+1}-u\left(t_{n+1}\right)\right) \Rg dt' \nonumber\\
    &=I^{'}_{2,1}+I^{'}_{2,2}.
\end{align*}
Integrating by parts and using the uniform estimates \eqref{UE}, we deduce that
\begin{align*}
    I_{2,1} &= -\int^{t_{n+1}}_{t_n} \Lg \nabla \Lambda^s \left(u^{n+1}-  u\left(t_{n+1}\right)\right),  \nabla \Lambda^s\left( u^{n+1}- u\left(t_{n+1}\right) \right)\Rg  dt'\nonumber\\
    &= -\nu \tau \| \nabla \Lambda^s \left( u^{n+1}- u\left(t_{n+1}\right) \right) \|^2_{L^2},
\end{align*}
and 
\begin{align*}
     I^{'}_{2,2} &= -\nu \int^{t_{n+1}}_{t_n} \Lg \nabla\Lambda^s \left(  u\left(t_{n+1}\right)-u\right),  \nabla\Lambda^s \left( u^{n+1}- u\left(t_{n+1}\right) \right)\Rg  dt'\nonumber\\
     &\leq \frac{1}{2} \nu \tau \| \nabla \Lambda^s\left( u^{n+1}- u\left(t_{n+1}\right) \right)\|^2_{L^2} + \frac{1}{2} \nu \int^{t_{n+1}}_{t_n} \|\nabla\Lambda^s u\left(t_{n+1}\right)-  \nabla\Lambda^s u \|^2_{L^2} dt'\nonumber\\
     &\leq \frac{1}{2} \nu \tau \| \nabla \Lambda^s \left( u^{n+1}- u\left(t_{n+1}\right) \right)\|^2_{L^2} + \frac{1}{2} \nu  \int^{t_{n+1}}_{t_n} \|\int^{t_{n+1}}_{t'} \partial_l \nabla \Lambda^s u dl \|^2_{L^2} dt'\nonumber\\
     & \leq \frac{1}{2} \nu \tau \| \nabla\Lambda^s \left( u^{n+1}- u\left(t_{n+1}\right) \right)\|^2_{L^2} + \frac{1}{2} \nu  \int^{t_{n+1}}_{t_n} \left( \int^{t_{n+1}}_{t'} \| \partial_l \nabla \Lambda^s u dl \|_{L^2} \right)^2 dt' \nonumber\\
     &\leq \frac{1}{2} \nu \tau \| \nabla \Lambda^s \left( u^{n+1}- u\left(t_{n+1}\right) \right)\|^2_{L^2}+ \nu \tau^3 \left( \|\nabla \Lambda^s\left( u\cdot \nabla u\right)\|^2_{L^\infty_tL^2} + \nu^2 \|\nabla \Lambda^s\De u\|^2_{L^\infty_tL^2}\right) \nonumber\\
     &\leq \frac{1}{2} \nu \tau \| \nabla \Lambda^s\left( u^{n+1}- u\left(t_{n+1}\right) \right)\|^2_{L^2} + C \nu \tau^3 (\|u_0\|^2_{H^{s+3}}+\nu^2) \|u_0\|^2_{H^{s+3}}.
\end{align*}
By the equality \eqref{chaxiang}, we can rewrite $I^{'}_3$ as follows
\begin{align*}
    I^{'}_3&= \int^{t_{n+1}}_{t_n} \Lg \Pi_N \LP  \Lambda^s\left ( \left( u-u\left(t_n\right)\right)\cdot \nabla u\right), \Lambda^s\left(u^{n+1}-u\left(t_{n+1}\right) \right)\Rg dt'\nonumber\\
    &\quad+\int^{t_{n+1}}_{t_n} \Lg \Pi_N \LP   \Lambda^s \left ( u\left(t_n\right)\cdot \nabla \left(u-u\left(t_{n+1}\right)\right)\right),\Lambda^s\left(u^{n+1}-u\left(t_{n+1}\right) \right) \Rg dt' \nonumber\\
    &\quad+ \int^{t_{n+1}}_{t_n} \Lg  \Pi_N\LP\Lambda^s \left ( \left( u\left(t_n\right)-u^n\right)\cdot \nabla u\left(t_{n
    +1}\right)\right), \Lambda^s\left(u^{n+1}-u\left(t_{n+1}\right) \right) \Rg dt'\nonumber\\
    &\quad+  \int^{t_{n+1}}_{t_n}   \Lg \Pi_N  \LP \Lambda^s\left (  u^n \cdot \nabla \left (u\left(t_{n
    +1}\right)-u^{n+1}\right)\right), \Lambda^s\left(u^{n+1}-u\left(t_{n+1}\right) \right) \Rg dt'\nonumber\\
    &:= I^{'}_{3,1} + I^{'}_{3,2} + I^{'}_{3,3} + I^{'}_{3,4}.
\end{align*} 
Using \eqref{NSE} and the uniform estimates \eqref{UE}, we obtain
\begin{align*}
    I^{'}_{3,1} &\leq \tau \|\left(u-u\left(t_n\right) \right)\|_{L^\infty_tH^s}\|u\|_{L^\infty_tH^{s+1}}\|\Lambda^s\left(u^{n+1}-u\left(t_{n+1}\right) \right)\|_{L^2}\nonumber\\
    &\leq C \tau \|\int_{t_n}^{t} \partial_t u   dt'\|_{H^s}\|u_0\|_{H^{s+1}}\|\Lambda^s\left(u^{n+1}-u\left(t_{n+1}\right) \right)\|_{L^2}\nonumber\\
    &\leq C \tau^2 \|u_0\|_{H^{s+1}}\|\Lambda^s\left(u^{n+1}-u\left(t_{n+1}\right) \right)\|_{L^2} \left(\|\left( u\cdot \nabla u \right)\|_{L^\infty_tH^s} + \nu\|\De u\|_{L^\infty_tH^s}\right)\nonumber\\
    &\leq C \tau^2 \|u_0\|_{H^{s+1}}\|\Lambda^s\left(u^{n+1}-u\left(t_{n+1}\right) \right)\|_{L^2}\left(\|u_0\|^2_{H^{s+1}} + \nu\|u_0\|_{H^{s+2}} \right),
\end{align*}
and
\begin{align*}
I^{'}_{3,2} &\leq \tau  \|u\left(t_n\right)\|_{H^s}\|\left( u-u\left(t_{n+1}\right)\right)\|_{L^\infty_tH^{s+1}}\|\Lambda^s \left(u^{n+1}-u\left(t_{n+1}\right)\right)\|_{L^2}\nonumber\\
&\leq C\tau \|u_0\|_{H^s}\|\int_{t}^{t^{n+1}} \partial_t u dt'\|_{L^\infty_tH^{s+1}}\|\Lambda^s \left(u^{n+1}-u\left(t_{n+1}\right)\right)\|_{L^2}\nonumber\\
&\leq C \tau^2\|u_0\|_{H^s}\|\Lambda^s \left(u^{n+1}-u\left(t_{n+1}\right)\right)\|_{L^2}\left(\|\left(u\cdot\nabla u\right)\|_{L^\infty_tH^{s+1}}+\nu\|\De u\|_{L^\infty_tH^{s+1}}\right)\nonumber\\
&\leq C\tau^2\|u_0\|_{H^s}\|\Lambda^s \left(u^{n+1}-u\left(t_{n+1}\right)\right)\|_{L^2}\left( \|u_0\|^2_{H^{s+2}} + \nu \|u_0\|_{H^{s+3}} \right)
\end{align*}
Similarly, we deduce that 
\begin{align*}
    I^{'}_{3,3} &\leq \tau \|\Lambda^s\left(u^{n}-u\left(t_{n}\right)\right)\|_{L^2} \|\Lambda^s\left(u^{n+1}-u\left(t_{n+1}\right)\right)\|_{L^2}\|u\left(t_{n+1}\right)\|_{H^{s+1}}\nonumber\\
    &\leq C\tau  \|\Lambda^s\left(u^{n}-u\left(t_{n}\right)\right)\|_{L^2} \|\Lambda^s\left(u^{n+1}-u\left(t_{n+1}\right)\right)\|_{L^2} \|u_0\|_{H^{s+1}},
\end{align*}
and
\begin{align*}
    I^{'}_{3,4}
    &=\int^{t_{n+1}}_{t_n}    \Lg \Lambda^s\left (  u^n \cdot \nabla \left (u\left(t_{n
    +1}\right)-u^{n+1}\right)\right),   \Lambda^s\left(u^{n+1}- \Pi_N u\left(t_{n+1}\right) \right) \Rg dt'   \nonumber\\
    &=\int^{t_{n+1}}_{t_n}    \Lg \Lambda^s\left (  u^n \cdot \nabla \left (u\left(t_{n
    +1}\right)-u^{n+1}\right)\right),   \Lambda^s\left(\mathcal{I}-\Pi_N\right)u\left(t_{n+1} \right) \Rg dt'
    \nonumber\\
    &\leq  C\tau\|\Lambda^{s-1}\left (  u^n \cdot \nabla \left (u\left(t_{n
    +1}\right)-u^{n+1}\right)\right) \|_{L^2}\|\Lambda^{s+1}\left(\mathcal{I}-\Pi_N\right) u\left(t_{n+1}\right)\|_{L^2}\nonumber\\
      &\leq  C\tau 
 N^{-2}\|u^n\|_{H^s}\|\Lambda^s\left(u^{n+1}-u\left(t_{n+1}\right)\right)\|_{L^2}\|u\|_{L^\infty_t H^{s+3}}\nonumber\\
  &\leq  C\tau 
 N^{-2}\|u_0\|_{H^s}\|\Lambda^s\left(u^{n+1}-u\left(t_{n+1}\right)\right)\|_{L^2}\|u_0\|_{H^{s+3}}.
\end{align*}
Moreover, we obtain
\begin{align*}
    I^{'}_{4}&\leq \tau \|\Lambda^s \left( u^{n+1}-u\left(t_{n+1}\right) \right)\|_{L^2}\| \Lambda^s \left(\mathcal{I}-\Pi_N\right)(u \cdot \na u)\|_{L^\infty_tL^2}\nonumber\\
    &\leq C\tau N^{-2}\|u_0\|^2_{H^{s+3}} \|\Lambda^s \left( u^{n+1}-u\left(t_{n+1}\right) \right)\|_{L^2}
\end{align*}
Based on the analysis above, we infer that
\begin{align*}
    \|\Lambda^s\left(u^{n+1}-u\left(t_{n+1}\right)\right)\|^2_{L^2} &\leq \frac{1}{2}\|\Lambda^s \left(u^{n+1}-u\left(t_{n+1}\right)\right)\|^2_{L^2}+\frac{1}{2}\|\Lambda^s \left(u^{n}-u\left(t_{n}\right)\right)\|^2_{L^2}  \nonumber\\
    &\quad-\frac{1}{2} \nu \tau \| \nabla \Lambda^s\left( u^{n+1}- u\left(t_{n+1}\right) \right)\|^2_{L^2} + C \nu \tau^3 (\|u_0\|^2_{H^{s+3}} + \nu^2 ) \|u_0\|^2_{H^{s+3}}\nonumber\\
    &\quad+C \tau^2 \|u_0\|_{H^{s+1}}\|\Lambda^s\left(u^{n+1}-u\left(t_{n+1}\right) \right)\|_{L^2}\left(\|u_0\|^2_{H^{s+2}} + \nu\|u_0\|_{H^{s+3}} \right)\nonumber\\
    &\quad+C\tau  \|\Lambda^s\left(u^{n}-u\left(t_{n}\right)\right)\|_{L^2} \|\Lambda^s\left(u^{n+1}-u\left(t_{n+1}\right)\right)\|_{L^2} \|u_0\|_{H^{s+1}}\nonumber\\
    &\quad+C\tau 
 N^{-2}\|u_0\|_{H^s}\|\Lambda^s\left(u^{n+1}-u\left(t_{n+1}\right)\right)\|_{L^2}\|u_0\|_{H^{s+3}}\nonumber\\
 &\quad+C\tau N^{-2}\|u_0\|^2_{H^{s+3}} \|\Lambda^s \left( u^{n+1}-u\left(t_{n+1}\right) \right)\|_{L^2},
\end{align*}
which implies that
\begin{align*}
    \|u^{n+1}-u\left(t_{n+1}\right)\|^2_{H^s} \leq \frac{1+C^* \tau}{1-C^* \tau}  \|u^{n}-u\left(t_{n}\right)\|^2_{H^s} + \frac{C^* (1+\nu^3) \tau^3}{1-C^* \tau}+\frac{C^*\tau N^{-4}}{1-C^* \tau},
\end{align*}
where $C^*$ depends on $\|u_0\|_{H^{s+3}}$.
Let us iterate over the above formula, 
\begin{align*}
    \|u^{n+1}-u\left(t_{n+1}\right)\|^2_{H^s}&\leq (\frac{1+C^* \tau}{1-C^* \tau})^{M+1}\|u^{0}-u\left(0\right)\|^2_{H^s}+C^*(\tau^2+N^{-4})\nonumber\\
    &\leq C^*(\tau^2+N^{-4}).
\end{align*}
This together with \eqref{SISH} ensures that
\begin{align*}
    \|u^{n+1} -v \left( t_{n+1}\right)\|_{H^{s}}&\leq C\|u^{n+1}-u\left(t_{n+1}\right)\|_{H^{s}}+  C\|\Pi_N u_0 - u_0\|_{H^{s}}+CN^2\nu \nonumber\\
    &\leq C^*(\tau+N^{-2})+ C\|\Pi_N u_0 - u_0\|_{H^{s}}+CN^2\nu\nonumber\\
    &\le C^*(\tau +N^{-2}+N^2\nu),
\end{align*}
where the last inequality follows from the observation that $\|\Pi_N u_0 - u_0\|_{H^{s}}\ls N^{-2}\|u_0\|_{H^{s+2}}\ls N^{-2}$. We thus complete the proof for \eqref{1.9}. It is also worth emphasizing here that in the proof we have proposed a new integration by parts technique to lower the regularity requirement from
$H^4$ to $H^3$ in order to perform the $L^2$-error estimate.

\section{Full discretization by the iterative Fourier spectral method}\label{sec5}
In this section we show that the semi-implicit scheme \eqref{1.3}-\eqref{1.5} can be performed with
less computational cost by using the Fast Fourier transform (FFT). Unlike in \cite{LMS22} where a convolution method with $\tau=O(N^{-1})$ was proposed, in practice we adopt the following iterative semi-implicit Fouerier spectral scheme:
\begin{equation}\label{5.1}
    \begin{cases}
        &\frac{u^{(m+1)}-u^n}{\tau}+\LP \Pi_N(u^n\cdot \na u^{(m)})=\nu\De u^{(m+1)},\\
        &\div u^{(m+1)}=0,\\
        &u^{(0)}=u^n,\quad u^0=\Pi_N u_0.
    \end{cases}
\end{equation}
Here at each time step $n$ we implement an iteration of $u^{(m)}$ and eventually $u^{(m)}$ converges. We therefore define $\lim_m u^{(m)}=u^{n+1}$ up to a local tolerance and therefore the iteration is finite. We show such iteration \eqref{5.1} converges.

\begin{thm}
    The iteration scheme \eqref{5.1} converges if $\tau\le C^*\max\{\nu,N^{-1}\}$ for some absolute constant $C^*$ that can be computed exactly. 
\end{thm}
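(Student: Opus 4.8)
The plan is to recast each iteration as a fixed-point map on the finite-dimensional space $X_N$ and to show that this map is a contraction in $L^2$ precisely when $\tau\le C^*\max\{\nu,N^{-1}\}$. Rearranging \eqref{5.1}, the update reads
\[
u^{(m+1)} = (\mathcal{I}-\tau\nu\De)^{-1}\big[\,u^n - \tau\,\LP\Pi_N(u^n\cdot\na u^{(m)})\,\big] =: T u^{(m)},
\]
where $(\mathcal{I}-\tau\nu\De)^{-1}$ is the Fourier multiplier with symbol $(1+\tau\nu|k|^2)^{-1}$; this is well defined, bounded by $1$ on $L^2$, and preserves the frequency support $|k|_\infty\le N$, so $T$ maps $X_N$ into $X_N$. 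Since $X_N$ is finite-dimensional (hence complete), it suffices to prove $T$ is a contraction; the Banach fixed point theorem then gives convergence of $u^{(m)}$ to the unique $u^{n+1}\in X_N$ solving \eqref{1.5}.

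I would first subtract the map evaluated at two inputs $u,\tilde u$. Writing $w=u-\tilde u$ and using that the transport term is linear in its second slot,
\[
T u - T\tilde u = -\tau\,(\mathcal{I}-\tau\nu\De)^{-1}\LP\Pi_N(u^n\cdot\na w).
\]
The crucial step is to exploit $\div u^n=0$ to put the transport term in divergence form, $u^n\cdot\na w=\div(u^n\otimes w)$, so that on the Fourier side the composite operator $(\mathcal{I}-\tau\nu\De)^{-1}\LP\Pi_N\div$ carries a symbol whose modulus is dominated by $\frac{|k|}{1+\tau\nu|k|^2}$ (here I use that $\LP$ and $\Pi_N$ have $L^2$ operator norm $\le1$). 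On the support $|k|_\infty\le N$ of $X_N$ there are two elementary bounds,
\[
\frac{|k|}{1+\tau\nu|k|^2}\le \sqrt2\,N\qquad\text{and}\qquad \frac{|k|}{1+\tau\nu|k|^2}\le \frac{1}{2\sqrt{\tau\nu}},
\]
the first by dropping the denominator and using $|k|\le\sqrt2\,N$, the second by maximizing the symbol over $|k|$ at $|k|=(\tau\nu)^{-1/2}$. By Plancherel and $\|u^n\otimes w\|_{L^2}\le\|u^n\|_{L^\infty}\|w\|_{L^2}$ this yields
\[
\|T u - T\tilde u\|_{L^2}\le \tau\,\|u^n\|_{L^\infty}\,\min\Big\{\sqrt2\,N,\ \tfrac{1}{2\sqrt{\tau\nu}}\Big\}\,\|w\|_{L^2}.
\]
Finally I would invoke Theorem~\ref{Thm1.1}(ii) together with the embedding $H^s\hookrightarrow L^\infty$ for $s>2$ to bound $\|u^n\|_{L^\infty}\le A:=C\|u_0\|_{H^s}$ uniformly in $n$.

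The contraction factor is thus $q:=\tau A\min\{\sqrt2\,N,(2\sqrt{\tau\nu})^{-1}\}$, and the argument closes by observing that $q<1$ holds as soon as $\tau\le C^*\max\{\nu,N^{-1}\}$: when $N^{-1}\ge\nu$ the first bound gives $q\le\sqrt2\,A\,N\tau\le\sqrt2\,A\,C^*$, while when $\nu>N^{-1}$ the second bound gives $q\le\tfrac{A}{2}\sqrt{\tau/\nu}\le\tfrac{A}{2}\sqrt{C^*}$, so both cases yield $q<1$ for $C^*$ small enough (explicitly $C^*=\min\{(\sqrt2\,A)^{-1},A^{-2}\}$, computable from $\|u_0\|_{H^s}$). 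The main obstacle is exactly the smoothing gain encoded by the divergence-form rewriting: without using $\div u^n=0$ one only controls $\|\na w\|_{L^2}$ via Bernstein's inequality and recovers merely the regime $\tau\ls N^{-1}$, whereas the viscous smoothing of the resolvent—visible only after placing the nonlinearity under a derivative—is what produces the more permissive alternative $\tau\ls\nu$ and hence the stated $\max\{\nu,N^{-1}\}$.
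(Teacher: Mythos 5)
Your proof is correct, but it takes a genuinely different route from the paper's. The paper argues by an energy method: it tests the difference equation for $w^{(m+1)}=u^{(m+1)}-u^{(m)}$ against $w^{(m+1)}$ and obtains the two regimes separately --- the regime $\tau\ls N^{-1}$ from the inverse (Bernstein) inequality $\|\na w^{(m)}\|_{L^2}\le CN\|w^{(m)}\|_{L^2}$ on $X_N$, and the regime $\tau\ls\nu$ by integrating by parts (using $\div u^n=0$) to move the derivative onto $w^{(m+1)}$ and then absorbing $\|\na w^{(m+1)}\|_{L^2}$ into the viscous term via the Cauchy--Schwarz/Young inequality. You instead make the fixed-point structure explicit by inverting $(\mathcal{I}-\tau\nu\De)$ and run symbol calculus: after the divergence-form rewriting $u^n\cdot\na w=\div(u^n\otimes w)$ --- which plays exactly the role of the paper's integration by parts, and which you correctly place \emph{outside} the product so that the resolvent sees the frequency of $u^n\otimes w$ rather than the uncontrolled derivative $\na w$ --- the two elementary bounds $\frac{|k|}{1+\tau\nu|k|^2}\le\min\bigl\{\sqrt2\,N,\ (2\sqrt{\tau\nu})^{-1}\bigr\}$ deliver both regimes from a single estimate. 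Your version buys a unified, fully quantitative contraction factor and makes the viscous smoothing mechanism transparent; the paper's energy argument is less explicit but transfers to settings where the evolution does not diagonalize in Fourier (variable coefficients, other boundary conditions). The treatments of the uniform bound on $\|u^n\|_{L^\infty}$ are equivalent: you cite Theorem~\ref{Thm1.1}(ii) together with $H^s\hookrightarrow L^\infty$ (legitimate and not circular, since the fixed point solves \eqref{1.5} and Theorem~\ref{Thm1.1} is proved independently of Section~\ref{sec5}), whereas the paper carries an induction hypothesis $\|u^n\|_{H^s}\ls 1$ and closes it with the commutator estimate Lemma~\ref{CL}. One cosmetic caveat: your explicit choice $C^*=\min\{(\sqrt2\,A)^{-1},A^{-2}\}$ yields only $q\le 1$ (not $q<1$) at the endpoint when $A\le\sqrt2$; halving it --- as your ``for $C^*$ small enough'' already allows --- restores strict contraction.
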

\begin{rem}
    In fact in practice it suffices to take time step $\tau=0.01$ for fixed $N=128$. We refer the readers to Section~\ref{sec6} for the numerical experiments. 
\end{rem}
\begin{proof}
    We consider the difference $w^{(m+1)}\coloneqq u^{(m+1)}-u^{(m)}$. Then it is not hard to observe that $w^{(m+1)}$ satisfies the equation below:
\begin{equation}\label{5.2}
    \frac{w^{(m+1)}}{\tau}+\LP \Pi_N(u^n\cdot \na w^{(m)})=\nu\De w^{(m+1)}.
\end{equation}
We test the \eqref{5.2} with $w^{(m+1)}$ to obtain that
\begin{align*}
    \frac{1}{\tau}\|w^{(m+1)}\|_{L^2}^2+\nu\|\na w^{(m+1)}\|_{L^2}^2\le |\Lg u^n\cdot \na w^{(m)}, w^{(m+1)}\Rg|.
\end{align*}
On one hand, we can obtain that
\begin{align*}
    \frac{1}{\tau}\|w^{(m+1)}\|_{L^2}^2\le& |\Lg u^n\cdot \na w^{(m)}, w^{(m+1)}\Rg|\\\le& \|u^n\|_{\infty}\|\na w^{(m)}\|_{L^2}\|w^{(m+1)}\|_{L^2}\\ \le& CN\|w^{(m)}\|_{L^2}\|w^{(m+1)}\|_{L^2},
\end{align*}
where we apply the assumption $w^{(m)}\in X_N$ and an induction hypothesis that $\|u^n\|_{\infty}\ls \|u^n\|_{H^s}\ls 1$. Then it suffices to take $\tau<CN^{-1}$ so that the contraction $\|w^{(m+1)}\|_{L^2}<\frac12 \|w^{(m)}\|_{L^2}$ holds. On the other hand, we observe that
\begin{align*}
    \frac{1}{\tau}\|w^{(m+1)}\|_{L^2}^2+\nu\|\na w^{(m+1)}\|_{L^2}^2\le& |\Lg u^n\cdot \na w^{(m)}, w^{(m+1)}\Rg|\\\le& \|u^n\|_{\infty}\|\na w^{(m+1)}\|_{L^2}\|w^{(m)}\|_{L^2},
\end{align*}
then by the Cauchy-Schwarz inequality we have
\begin{align*}
    \frac{1}{\tau}\|w^{(m+1)}\|_{L^2}^2\le C\nu^{-1}\|w^{(m)}\|_{L^2}^2.
\end{align*}
hen it suffices to take $\tau<C\nu$ so that the contraction holds. We then show the limit $u^{n+1}$ exists in the $L^2$ sense by the standard fixed point theory under the induction hypothesis  $\|u^{n}\|_{H^s}\ls 1$. Lastly, to close the induction that $\|u^{n+1}\|_{H^s}$ one only need to iterate the arguments above together with the commutator estimate Lemma~\ref{CL}.
    
\end{proof}

\section{Numerical Experiments}\label{sec6}
In this section, we present numerical evidence including the dynamics and the error.

\subsection{Benchmark examples}
 In this subsection we present simulation examples with several different initial data. We first consider the following initial data motivated by \cite{LMS22}: 
 \begin{equation}\label{6.1}
     \vec{u}_0=(-\frac{m}{2}(\cos(x))^m(\cos(y))^{m-1}\sin(y),\ \frac{m}{2}(\cos(x))^{m-1}(\cos(y))^{m}\sin(x)),
 \end{equation}
where we vary the value of $m$ and it is easy to check that $\div\vec{u}_0=0$. We present the stream lines of the velocity field with different $m$-values below in Figure~\ref{fig1}. Recall that the Reynolds number $\operatorname{Re}\approx \frac{u}{\nu}$, therefore we can infer that when $m$ increases the system is getting more turbulent because the flow speed is bigger. Such inference is convinced by the dynamics in Figure~\ref{fig1}.

\begin{figure}[htb]
\centering
\includegraphics[width=0.32\textwidth]{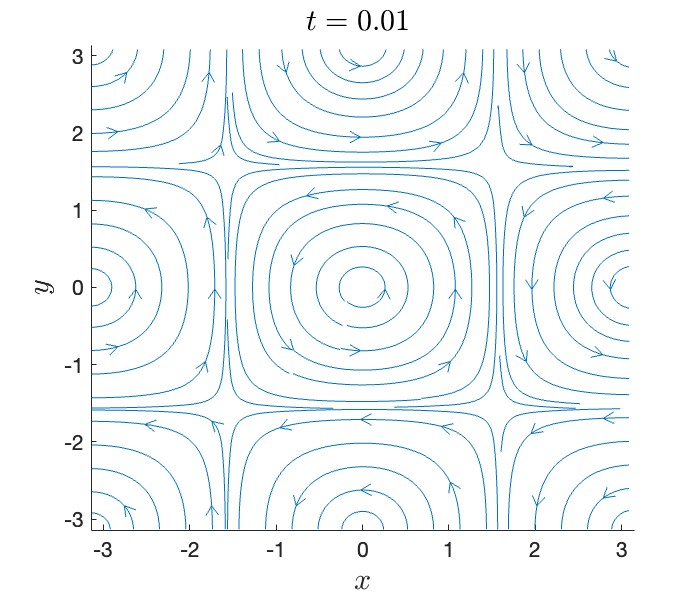}
\includegraphics[width=0.32\textwidth]{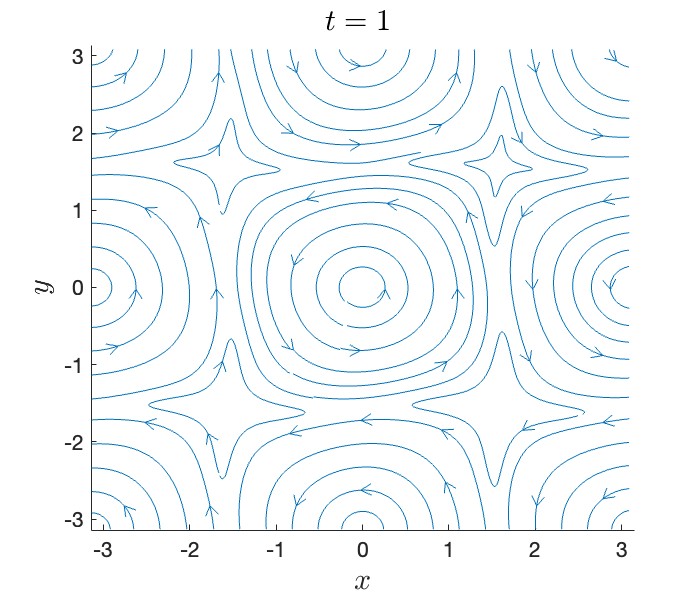}
\includegraphics[width=0.32\textwidth]{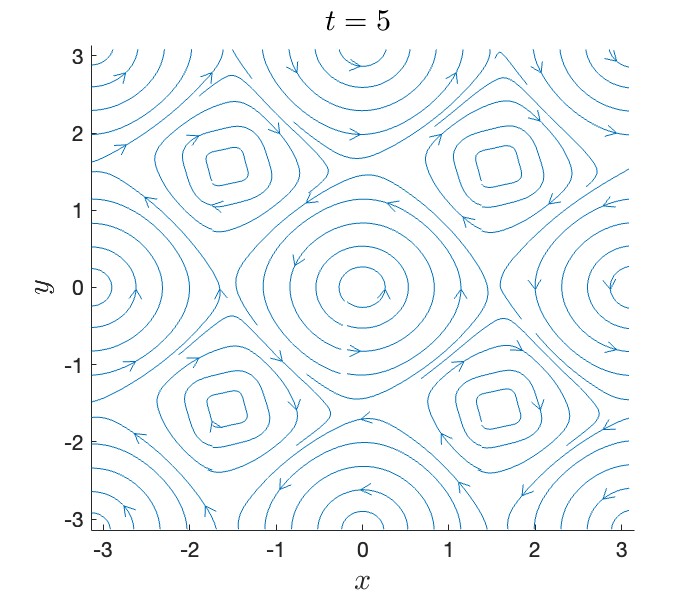}\\
\includegraphics[width=0.32\textwidth]{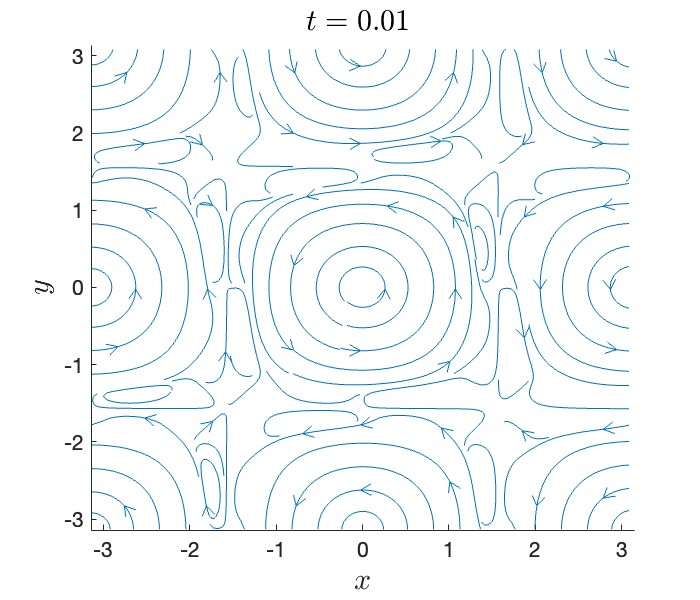}
\includegraphics[width=0.32\textwidth]{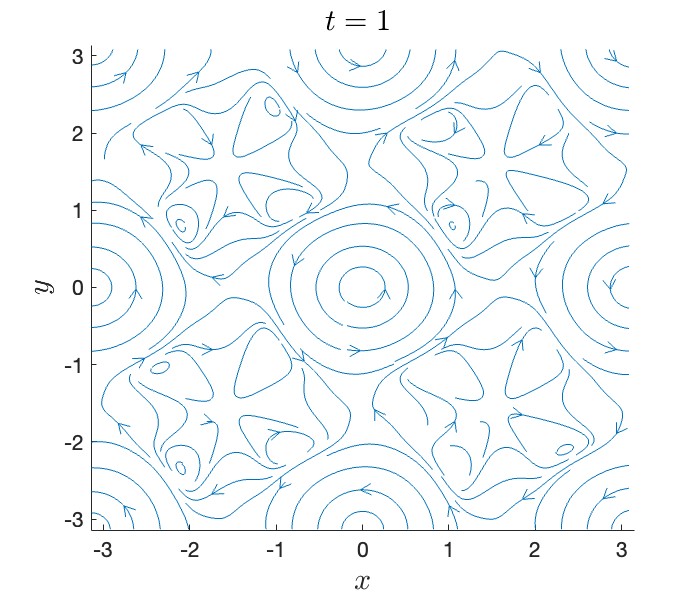}
\includegraphics[width=0.32\textwidth]{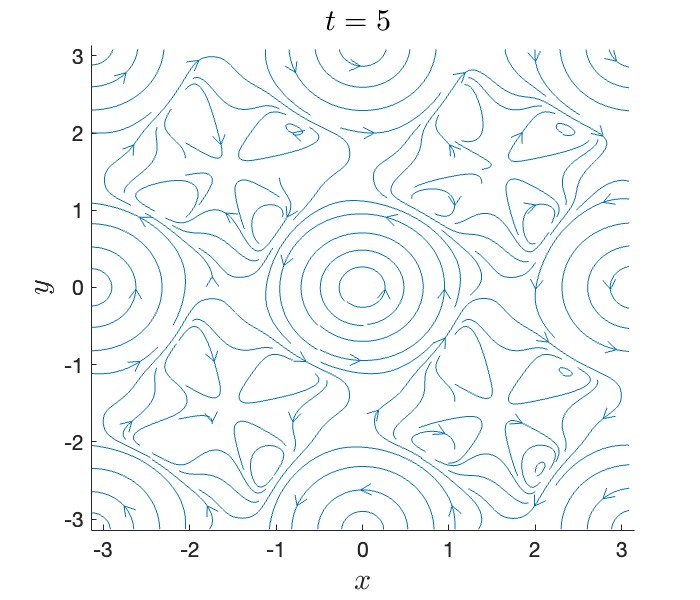}\\
\includegraphics[width=0.32\textwidth]{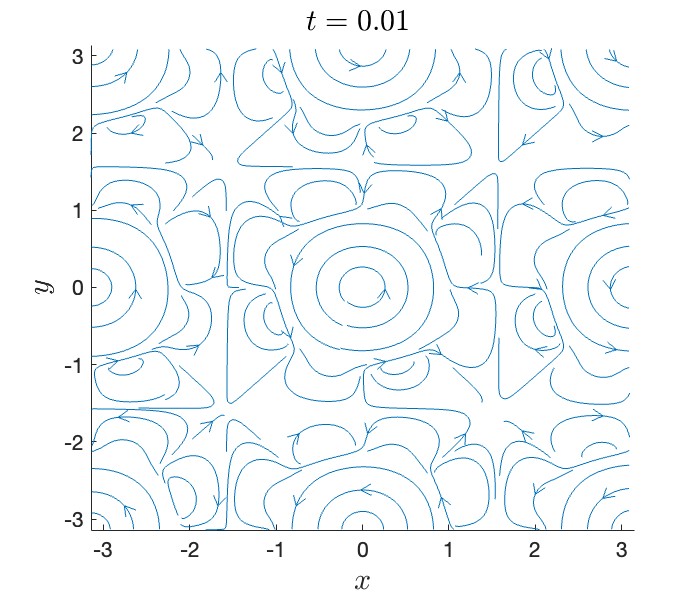}
\includegraphics[width=0.32\textwidth]{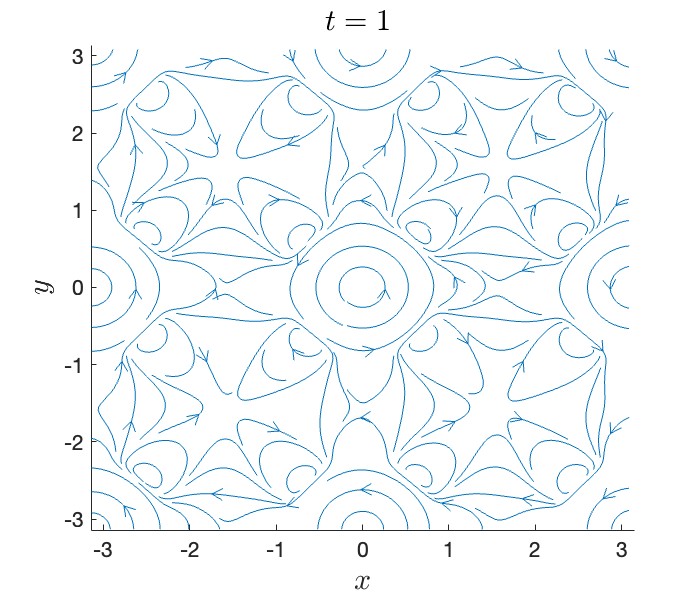}
\includegraphics[width=0.32\textwidth]{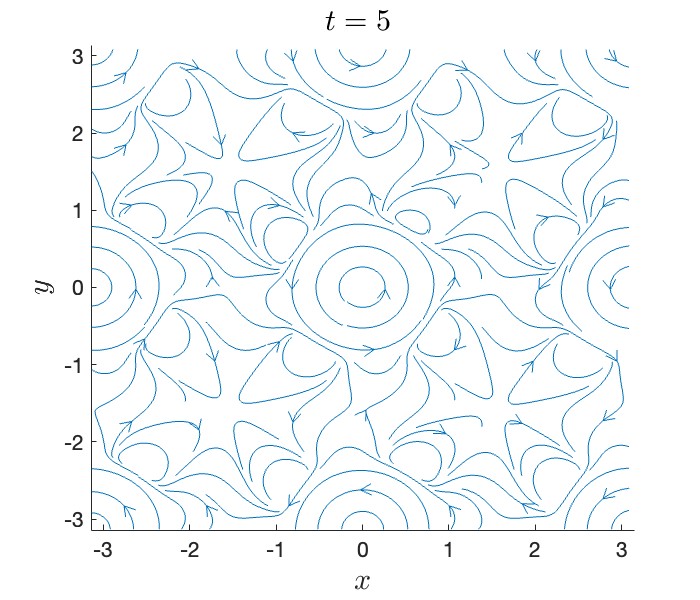}\\

\caption{\small Dynamics of 2D incompressible Euler equations (NS equations) by scheme \eqref{5.1} where $\nu = 0.0001$, $\tau= 0.01,~N_x=N_y = 128$ and the initial data $u_0$ is given in \eqref{6.1}. $m=2$ in the first line, $m=8$ in the second line and $m=20$ in the third line.}\label{fig1}
\end{figure}

In the second example we consider the following double shear flow initial condition:
 \begin{equation}\label{6.2}
     w_0=\na^\perp \cdot \vec{u}_0=\begin{cases}
         &0.05\cos(x+\pi)-\frac{1}{\rho_0}\sech^2(\frac{1}{\rho_0}(y+\frac{\pi}{2})),\, y\le 0\\
         &0.05\cos(x+\pi)+\frac{1}{\rho_0}\sech^2(\frac{1}{\rho_0}(y-\frac{\pi}{2})),\, y>0
     \end{cases},
 \end{equation}
where $\rho_0=\frac{\pi}{15}$. We present the dynamics of the vorticity $w=\na^{\perp}\cdot u$ in Figure~\ref{fig2}. Here we fix $\tau= 0.001,~N_x=N_y = 128$ and the initial data $u_0$ is given in \eqref{6.2}. We vary the choice $\nu = 0,0.001,0.1$.
From the dynamics we can observe that when the Reynolds number is large (viscosity $\nu$ is small) the dynamics of the double shear flow tends to rotate and therefore to be turbulent due to the transport term $u\cdot \na u$. On the contrary, while the Reynolds number is small (viscosity $\nu$ is large) the dynamics of the double shear flow tends to be more stable due to the viscosity term $\nu\De u$.

\begin{figure}[htb]
\centering
\includegraphics[width=0.32\textwidth]{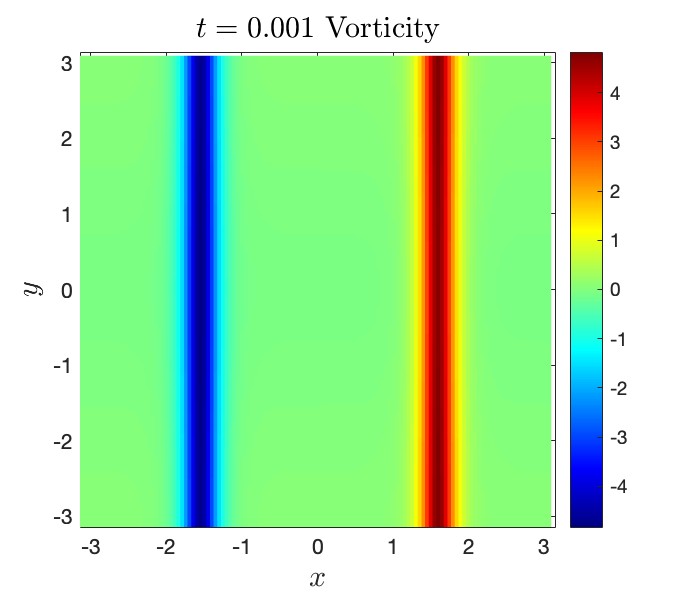}
\includegraphics[width=0.32\textwidth]{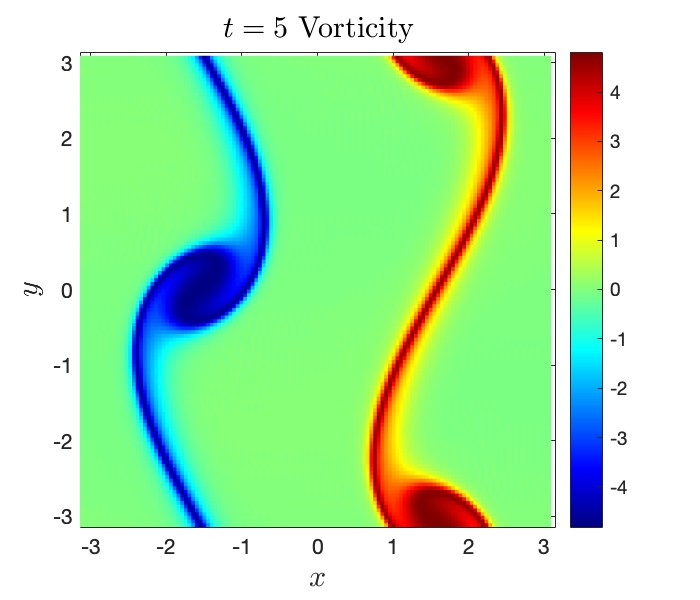}
\includegraphics[width=0.32\textwidth]{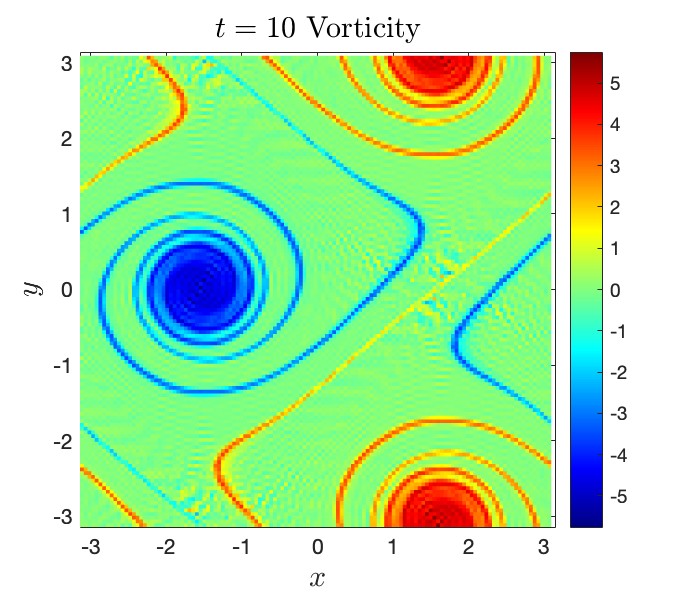}\\
\includegraphics[width=0.32\textwidth]{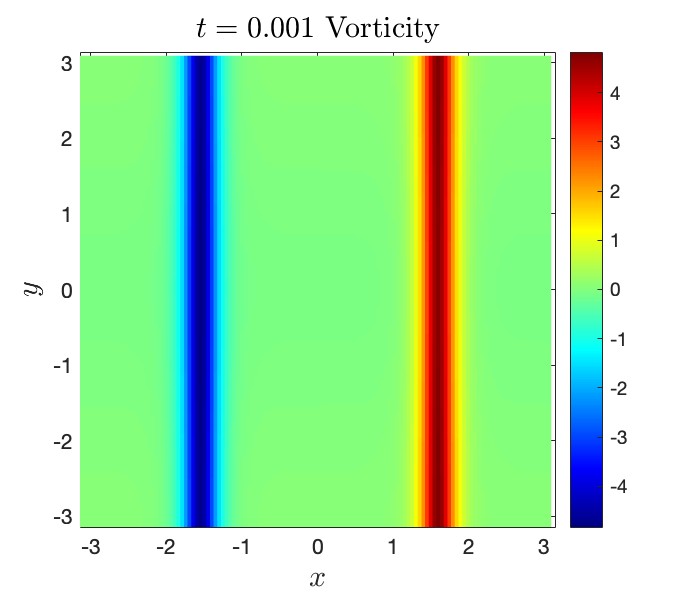}
\includegraphics[width=0.32\textwidth]{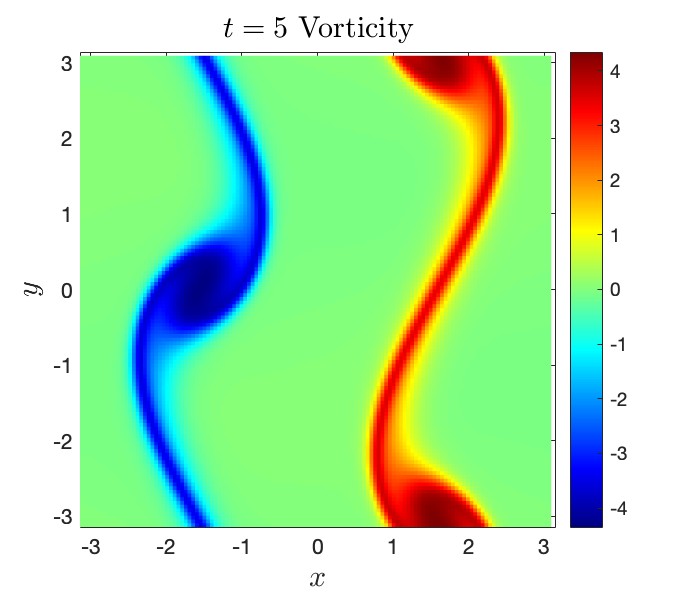}
\includegraphics[width=0.32\textwidth]{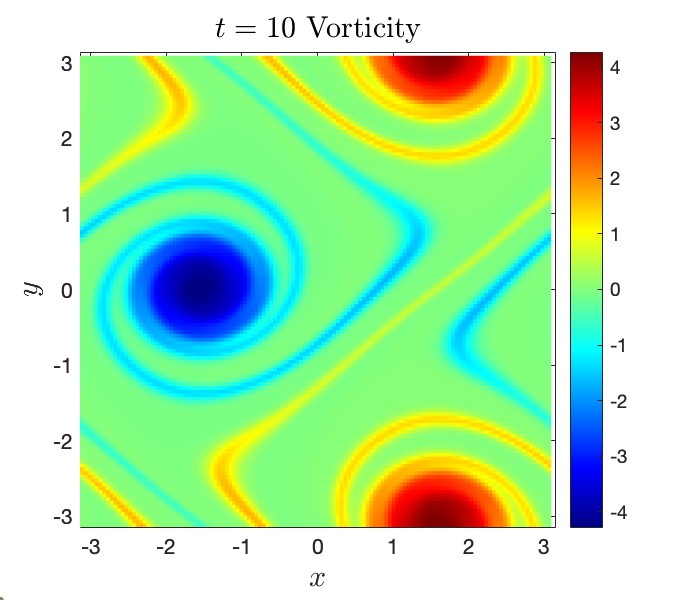}\\
\includegraphics[width=0.32\textwidth]{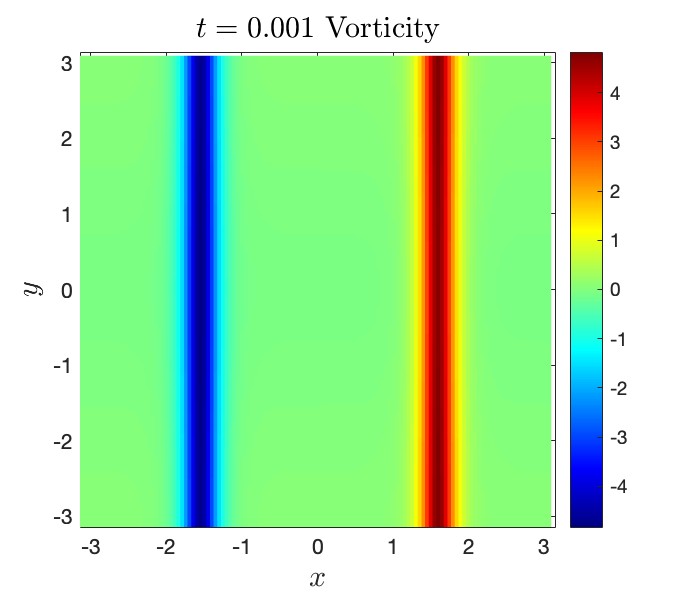}
\includegraphics[width=0.32\textwidth]{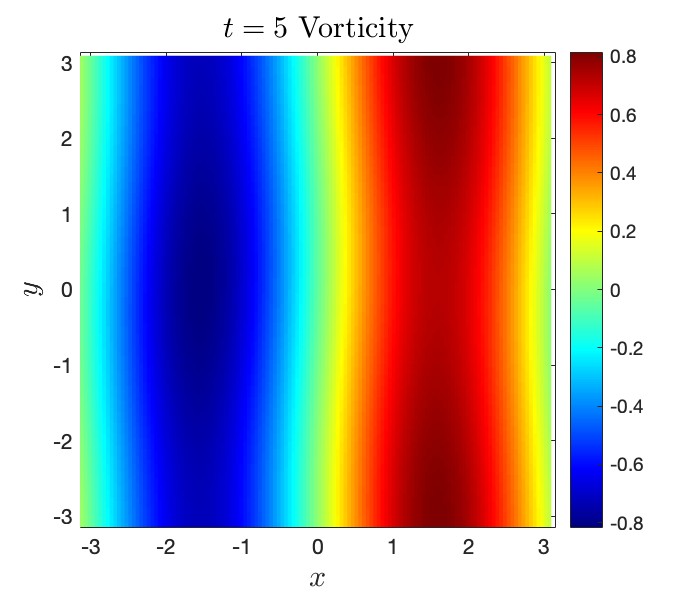}
\includegraphics[width=0.32\textwidth]{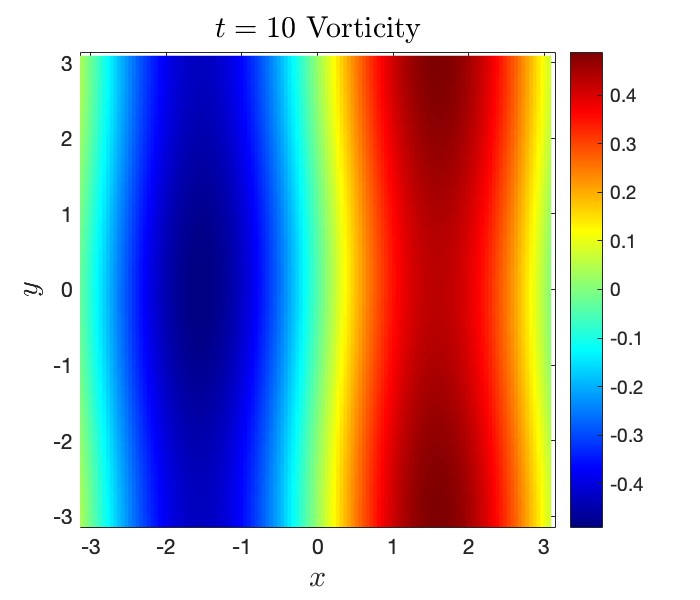}

\caption{\small Dynamics of 2D incompressible Euler (NS) equations by scheme \eqref{5.1} where $\tau= 0.001,~N_x=N_y = 128$ and the initial data $u_0$ is a double shear flow given in \eqref{6.2}. We vary the choice $\nu = 0,0.001,0.1$. $\nu=0$ in the first line, $\nu=0.001$ in the second line and $\nu=0.1$ in the third line.}\label{fig2}
\end{figure}

In the third example we consider the following initial data of two Guassian vortices. The initial vorticity is given as 
\begin{equation}\label{6.3}
    w_0=\na^\perp\cdot \vec{u_0}=\exp(-5((x+\frac{\pi}{4})^2+y^2))+\exp(-5((x-\frac{\pi}{4})^2+y^2)).
\end{equation}
We present the dynamics of the vorticity by fixing $\tau=0.001, N_x=N_y=128$ with the initial data given in \eqref{6.3}. We present the vorticity with different $\nu=0,0.001,0.1$ at $T=50$ in Figure~\ref{fig3}. \footnote[1]{The second and the third benchmark examples are motivated by \cite{Hakim22}.} From the dynamics we can observe that when the viscosity $\nu$ is small the dynamics of the two Guassian vortices tend to merge as they orbit around each other. On the contrary, while the viscosity $\nu$ is large the dynamics tend to be more stable.

\begin{figure}[htb]
\centering
\includegraphics[width=0.32\textwidth]{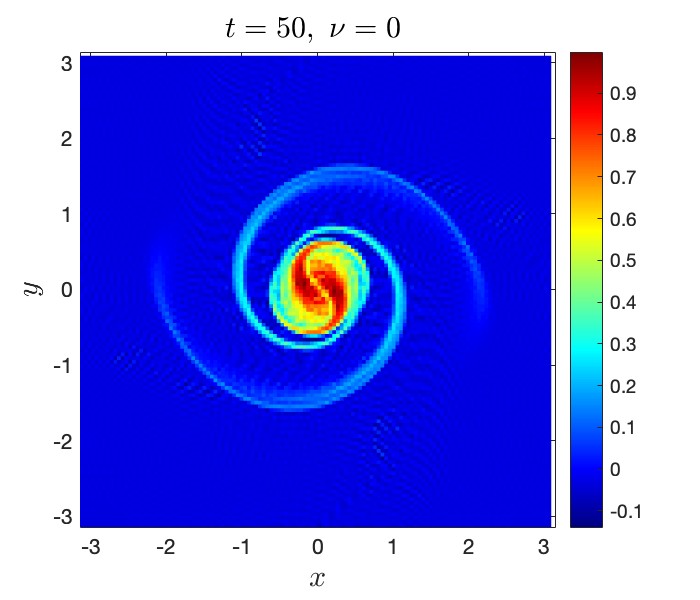}
\includegraphics[width=0.32\textwidth]{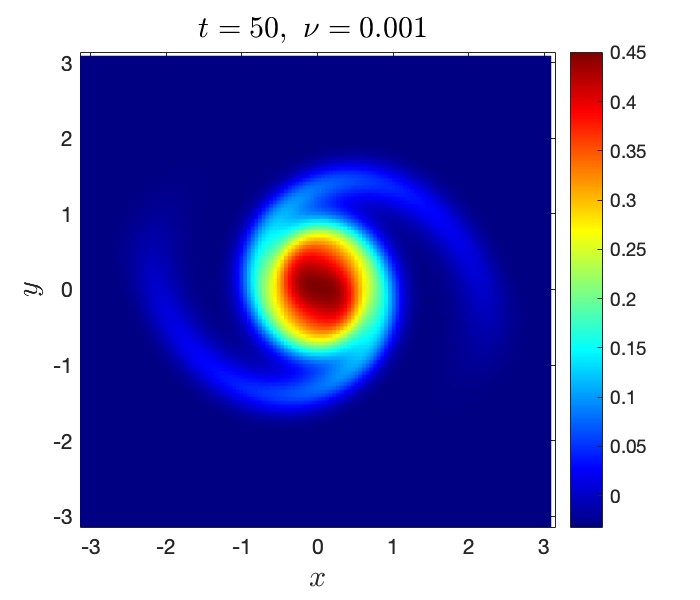}
\includegraphics[width=0.32\textwidth]{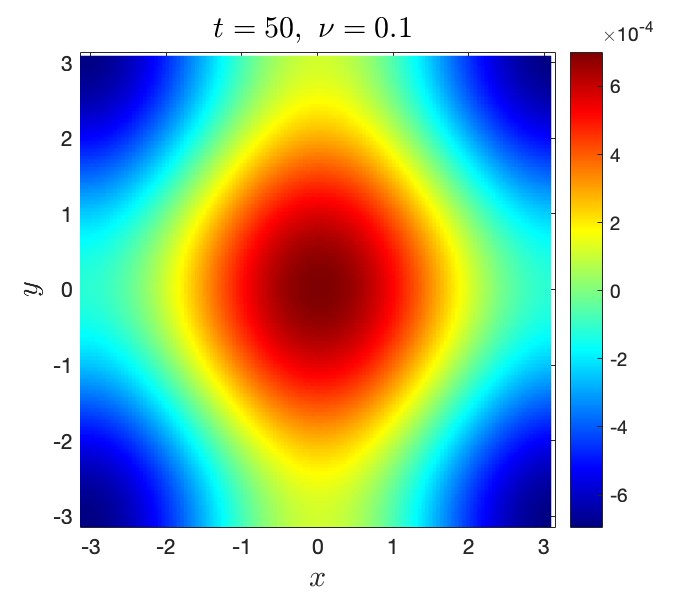}

\caption{\small Dynamics of 2D incompresible Euler (NS) equations by scheme \eqref{5.1} where $\tau= 0.001,~N_x=N_y = 128$ and the initial data $u_0$ is composed of two Guassian vortices given in \eqref{6.3}.}\label{fig3}
\end{figure}

 \subsection{Convergence test}
In this section we present the error of the scheme \eqref{5.1}. Throughout this subsection we choose an explicit solution to the Euler equations: 
\begin{equation}
\vec{u_e}=(-0.5\exp(-t)\sin(x)\cos(y),0.5\exp(-t)\cos(x)\sin(y)),
\end{equation}
with initial data $u_0=(-0.5\sin(x)\cos(y),0.5\cos(x)\sin(y))$.
Of course $\vec{u_e}$ cannot solve the exact Euler equations however it solves an Euler system with forcing term $\vec{f_e}$ that can be computed exactly. As a result we only need to compare the numerical solutions of the forced Euler (NS) system to the exact solution $\vec{u_e}$. To be more clear, the numerical solution $u_n$ we consider is the following 
\begin{equation}\label{6.4}
\frac{u^{n+1}-u^n}{\tau} +\Pi_N\LP(u^n \cdot \na u^{n+1})=\nu \Delta u^{n+1}+f_n,
\end{equation}
where $f_n=\vec{f_e}(n\cdot\tau)$. Moreover to solve \eqref{6.4} we apply the iterative Fourier spectral method with forcing term:
\begin{equation}\label{6.5}
    \begin{cases}
        &\frac{u^{(m+1)}-u^n}{\tau}+\LP \Pi_N(u^n\cdot \na u^{(m)})=\nu\De u^{(m+1)}+f_n,\\
        &\div u^{(m+1)}=0,\\
        &u^{(0)}=u^n,\quad u^0=\Pi_N u_0.
    \end{cases}
\end{equation}

In the first experiment we fix $\nu= 0.00001, N_x=N_y=128$ and we vary the time step $\tau=\frac{0.1}{2^k}$. The local residual is fixed to be 1e-10 in the iteration scheme \eqref{6.5}. The $L^\infty, L^2, H^1, H^6$-error at $T=2$ can be found below in Table~\ref{table1}. We see that the all the errors (of lower regularity) behave as $O(\tau)$, which follows Theorem~\ref{Thm1.3}.

\begin{table}[htb]
\begin{minipage}[t]{0.48\textwidth}
    \centering
\begin{tabular}{||c || c|| c ||c ||c||} 
 \hline
 $\tau=0.1$ & $L^2$-error & $L^\infty$-error & $H^1$-error& $H^6$-error \\ [0.5ex]
 \hline
 $\tau$ & 0.0961 & 0.0432 & 0.2319 &0.8654 \\[0.5ex]
 
 $\tau/2$ & 0.0481 & 0.0216 &0.1160& 0.4326
 \\[0.5ex]
$\tau/4$ &  0.0241& 0.0108 &0.0581&0.2165
 \\[0.5ex]
 $\tau/8$ & 0.0120 &0.0054 &0.0291&0.1084
 \\[0.5ex]
 $\tau/16$ & 0.0060 & 0.0027 & 0.0146&0.0544
 \\[0.5ex]
 $\tau/32$ & 0.0030 & 0.0014 &0.0073& 0.0274\\
 \hline 
\end{tabular}
\end{minipage}

\begin{minipage}[t]{\textwidth}
\centering
\includegraphics[width=0.48\textwidth]{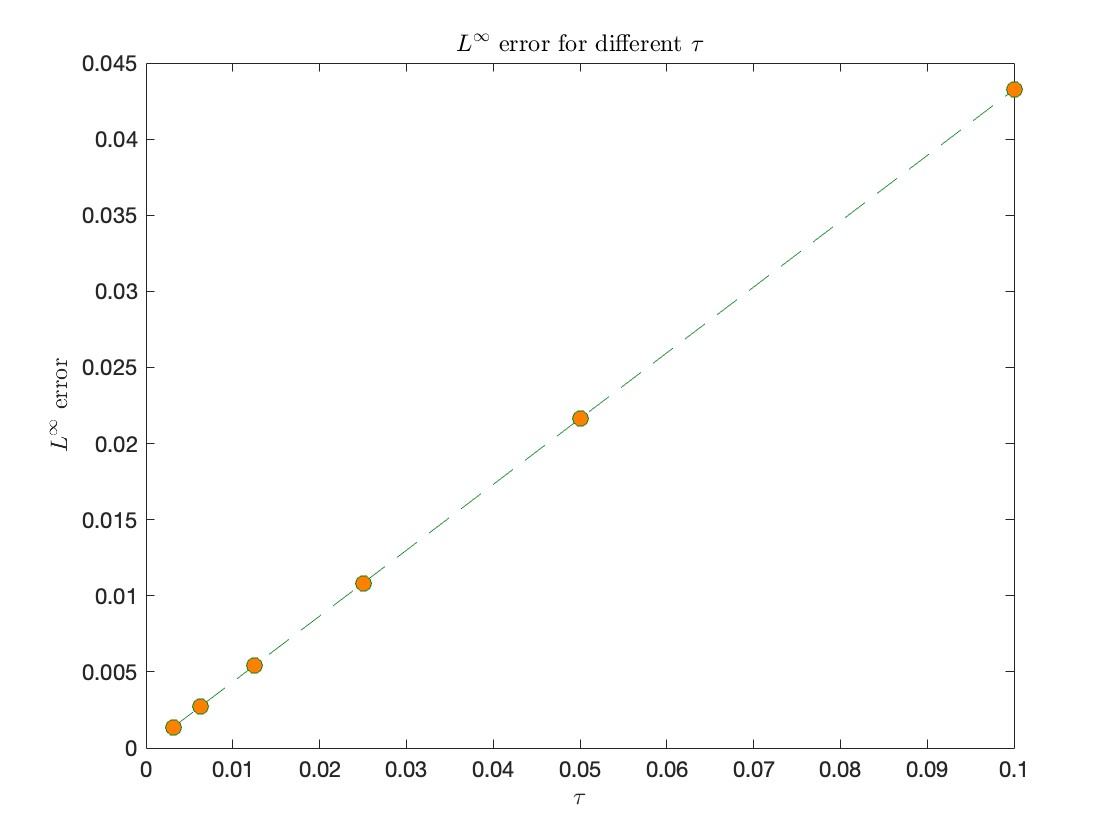}
\includegraphics[width=0.48\textwidth]{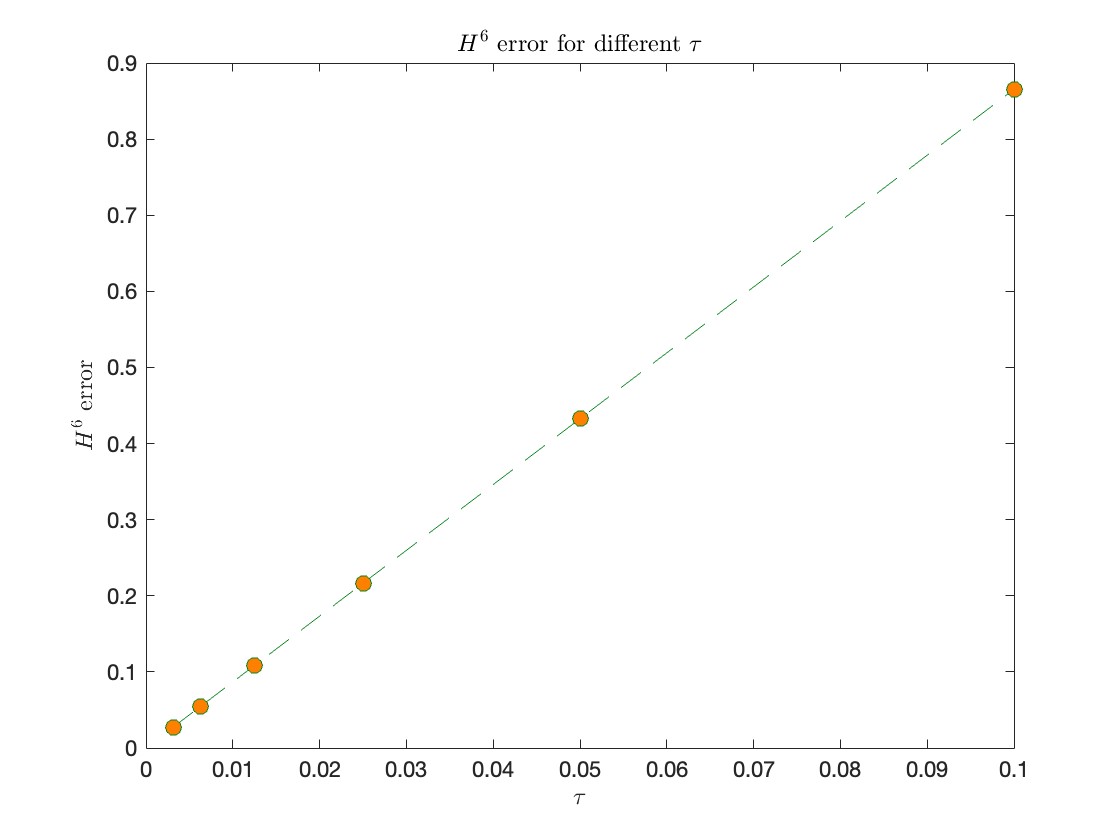}

\end{minipage}
\caption{Errors for fixed $\nu$ and varying $\tau$.}
\label{table1}
\end{table}

In the second experiment, we fix $\tau= 0.0001, N_x=N_y=128$ and we vary $\nu=\frac{0.1}{2^k}$. The local residual is fixed to be 1e-10 in the iteration scheme \eqref{6.5}. The $L^\infty, L^2, H^1, H^6$-error at $T=0.1$ can be found below in Table~\ref{table2}. We see that the all the errors (of lower regularity) behave as $O(\nu)$.

\begin{table}[htb]
\begin{minipage}[t]{0.48\textwidth}
    \centering
\begin{tabular}{||c || c|| c ||c ||c||} 
 \hline
 $\nu=0.1$ & $L^2$-error & $L^\infty$-error & $H^1$-error& $H^6$-error \\ [0.5ex]
 \hline
 $\nu$ & 0.0418 & 0.0188 & 0.1010 &0.3764 \\[0.5ex]
 
 $\nu/2$ & 0.0210 & 0.0095 &0.0508& 0.1892
 \\[0.5ex]
$\nu/4$ &  0.0105& 0.0047 &0.0255&0.0949
 \\[0.5ex]
 $\nu/8$ & 0.0053 &0.0024 &0.0128&0.0476
 \\[0.5ex]
 $\nu/16$ & 0.0026 & 0.0012 & 0.0064&0.0238
 \\[0.5ex]
 $\nu/32$ & 0.0013 & 5.9873e-04 &0.0032& 0.0120\\
 \hline 
\end{tabular}
\end{minipage}

\begin{minipage}[t]{\textwidth}
\centering
\includegraphics[width=0.48\textwidth]{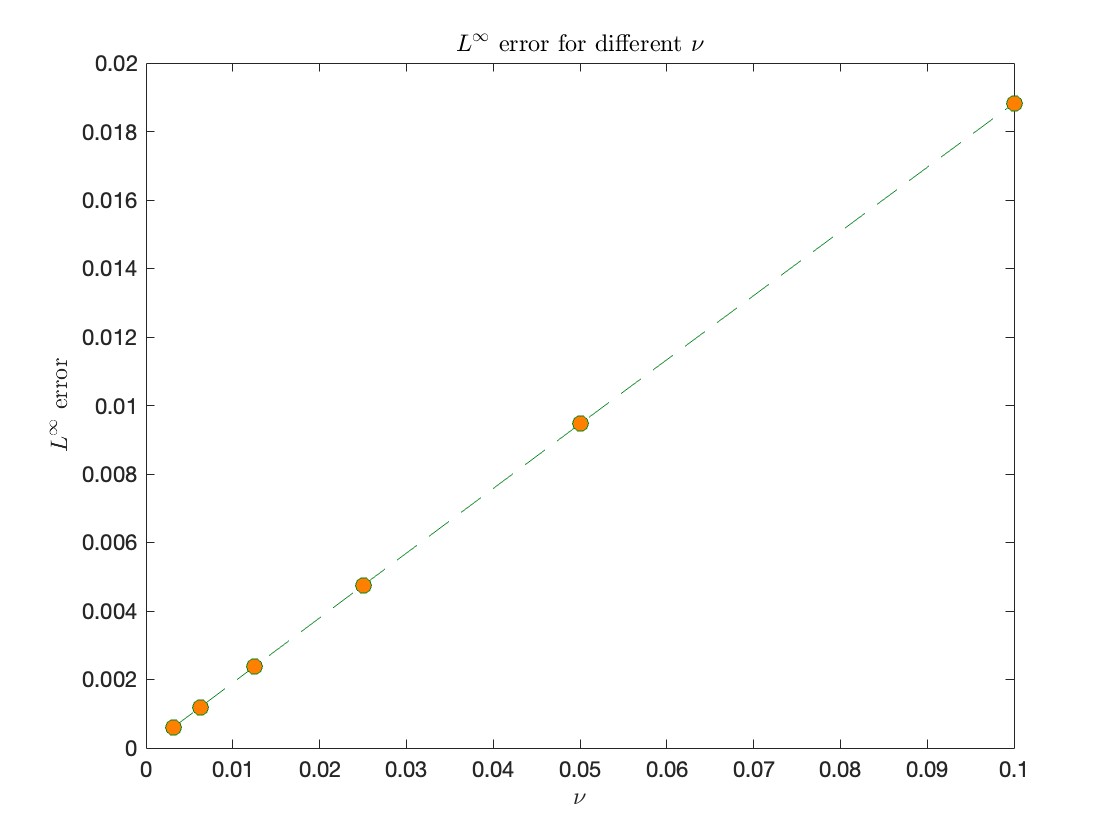}
\includegraphics[width=0.48\textwidth]{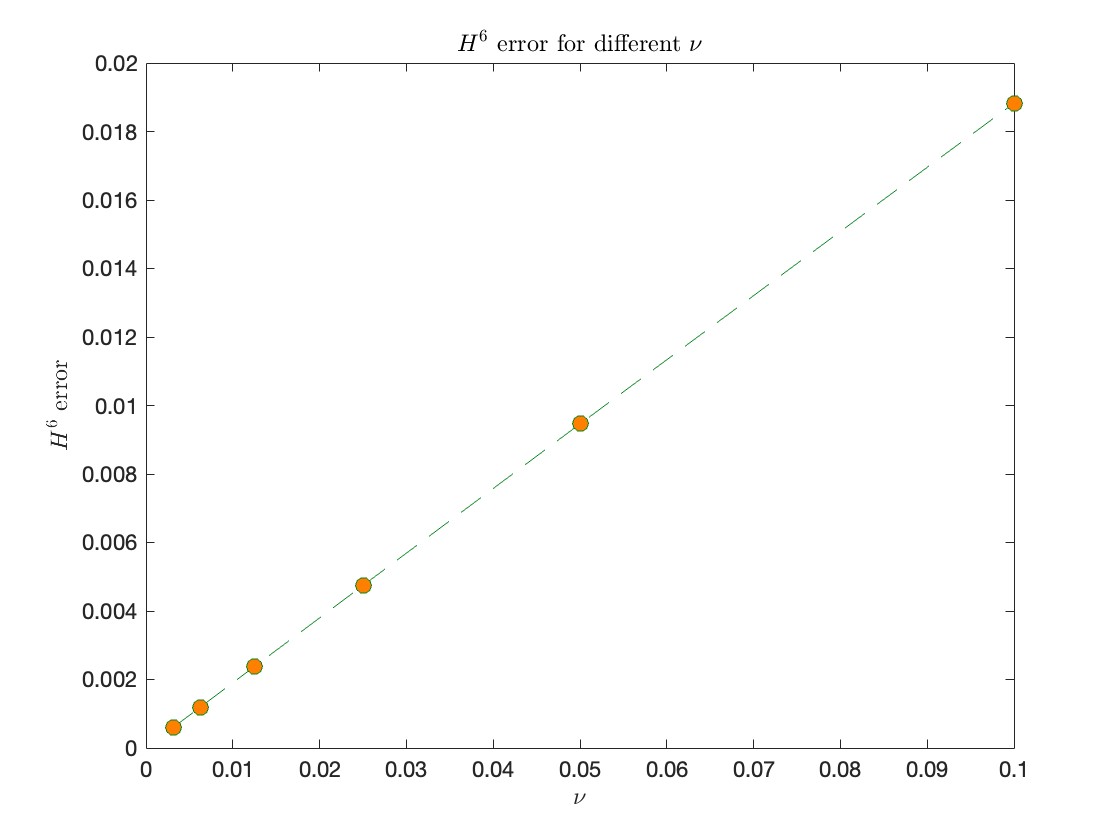}

\end{minipage}
\caption{Errors for fixed $\tau$ and varying $\nu$.}
\label{table2}
\end{table}




In the third experiment, we present the high-regularity errors with different choice of $\nu$ and $N$. We fix $\tau=0.0001$ here. As shown in Table~\ref{table3} the low regular $H^6$-error and $L^\infty$-error are very similar to Table~\ref{table2}, while the high regular $H^8$-error at $T=0.01$ behave very differently. Usually one may expect the error gets smaller as $N$ increases; however such intuition does not apply here since the high-regularity error may increase due to the $N^2\nu$ upper bound as in Theorem~\ref{Thm1.3}. It is also worth pointing here that $N^2\nu$ is only a technical upper bound and the real $H^8$-error may not lie in this order. Indeed as can be shown in the last column of the last table, the $H^8$-error is around $61$ even though $\nu=\frac{1}{2^k}$.

\begin{table}[htb]
\begin{minipage}[t]{0.48\textwidth}
    \centering
\begin{tabular}{||c || c|| c ||c ||} 
 \hline
 $N=64,\nu=1$ &  $L^\infty$-error & $H^6$-error& $H^8$-error \\ [0.5ex]
 \hline
 $\nu$ & 0.0188 & 0.3900 &0.7366 \\[0.5ex]
 
 $\nu/2$ & 0.0095 &0.1960& 0.3702
 \\[0.5ex]
$\nu/4$ &   0.0047 &0.0982& 0.1856
 \\[0.5ex]
 $\nu/8$ & 0.0024 &0.0492&0.0929
 \\[0.5ex]
 $\nu/16$ &  0.0012 & 0.0246&0.0465
 \\[0.5ex]
 $\nu/32$ &  5.9873e-04 &0.0123& 0.0233\\
 \hline 
\end{tabular}
\end{minipage}
\\

\begin{minipage}[t]{0.48\textwidth}
    \centering
\begin{tabular}{||c || c|| c||c ||} 
 \hline
 $N=128,\nu=1$ & $L^\infty$-error & $H^6$-error & $H^8$-error\\ [0.5ex]
 \hline
 $\nu$ & 0.0188 & 0.3900& 0.7658  \\[0.5ex]
 
 $\nu/2$ & 0.0095 & 0.1960& 0.4251
 \\[0.5ex]
$\nu/4$ &  0.0047& 0.0982 &0.2789
 \\[0.5ex]
 $\nu/8$ & 0.0024 &0.0492 &0.2267
 \\[0.5ex]
 $\nu/16$ & 0.0012 & 0.0246& 0.2103
 \\[0.5ex]
 $\nu/32$ & 5.9873e-04 & 0.0123& 0.2042\\
 \hline 
\end{tabular}
\end{minipage}
\\

\begin{minipage}[t]{0.48\textwidth}
    \centering
\begin{tabular}{||c || c|| c||c ||} 
 \hline
 $N=256,\nu=1$ & $L^\infty$-error & $H^6$-error & $H^8$-error\\ [0.5ex]
 \hline
 $\nu$ & 0.0188 & 0.3900& 61.3436  \\[0.5ex]
 
 $\nu/2$ & 0.0095 & 0.1960& 61.3190
 \\[0.5ex]
$\nu/4$ &  0.0047& 0.0983 &61.3073
 \\[0.5ex]
 $\nu/8$ & 0.0024 &0.0493 & 61.3013
 \\[0.5ex]
 $\nu/16$ & 0.0012 & 0.0247& 61.2978
 \\[0.5ex]
 $\nu/32$ & 5.9873e-04 & 0.0126& 61.2932\\
 \hline 
\end{tabular}
\end{minipage}

\caption{Errors for fixed $\tau$ and varying $\nu$.}
\label{table3}
\end{table}

\section{Concluding remark}
To conclude, we give a systematic approach on studying the incompressible Euler equations numerically by the Fourier spectral method via the
vanishing viscosity limit. Another main contributions of this work is to propose a new integration by
parts technique to lower the regularity requirement from $H^4$ to $H^3$ in order to perform the $L^2$-error
estimate. Indeed, this analysis framework can be applied to more general models and higher order schemes. We leave the discussion in subsequent works.

\appendix
\section{Extra energy dissipation of first derivative for velocity of the semi-implicit scheme}\label{appA}
In this section we prove Corollary~\ref{Cor1.2}. Multiplying $-\Delta u^{n+1}$ to \eqref{Semischeme2} and integrating on periodic torus $\T^2$, we obtain
\begin{align*}
    \frac{1}{2}\|\nabla u^{n+1}\|^2_{L^2} +\frac{1}{2}\|\nabla \left( u^{n+1}-u^{n}\right)\|^2_{L^2} -\tau \Lg u^n\cdot\nabla u^{n+1}, \De u^{n+1}\Rg +\nu\tau\|\nabla^2 u^{n+1}\|^2_{L^2}=\frac{1}{2}\|\nabla u^n\|^2_{L^2}.
\end{align*}
Notice that $\int_{\T^2} (u\cdot\nabla) u\cdot \De u dx=0$ provided $\nabla\cdot u=0$. Then we have 
\begin{align*}
    -\tau \Lg u^n\cdot\nabla u^{n+1}, \De u^{n+1}\Rg &= \tau \Lg \left(u^{n=1}-u^{n}\right) \cdot\nabla u^{n+1}, \De u^{n+1}\Rg -\tau \Lg u^{n+1}\cdot\nabla u^{n+1}, \De u^{n+1}\Rg\nonumber\\
    &= -\tau \Lg \nabla\left (u^{n+1}-u^{n}\right) \cdot\nabla u^{n+1}, \nabla u^{n+1}\Rg\nonumber\\
    &\leq \tau \|\nabla\left (u^{n+1}-u^{n}\right)\|_{L^2}\|\nabla u^{n+1}\|^2_{L^4}\nonumber\\
    &\leq \tau \|\nabla\left (u^{n+1}-u^{n}\right)\|_{L^2}\|\nabla^2 u^{n+1}\|^2_{L^2}.
\end{align*}
Besides if we require $C\tau\leq \nu$ additionally, then one can deduce from \eqref{Semischeme2} that 
\begin{align*}
    \|\nabla\left (u^{n+1}-u^{n}\right)\|_{L^2}&\leq \tau \|\nabla \left(u^n\cdot \nabla u^{n+1}\right)\|_{L^2} + \nu \tau\|u^{n+1}\|_{L^\infty_t\dot{H}^3}\nonumber\\
    &\leq \tau (\|u^n\|_{H^3} +\nu)   \|u^{n+1}\|_{H^3}\nonumber\\
    &\leq C \tau \nonumber\\
    &\leq \nu.
\end{align*}
As a result, we can obtain an extra $\dot{H}^1$-energy dissipation:
\begin{align*}
     \frac{1}{2}\|\nabla u^{n+1}\|^2_{L^2} +\frac{1}{2}\|\nabla \left( u^{n+1}-u^{n}\right)\|^2_{L^2} \leq \frac{1}{2}\|\nabla u^n\|^2_{L^2}.
\end{align*}

\section{Energy estimates and vanishing viscosity limit for the Navier-Stokes equation} \label{appB}
In this section we give an alternative proof of Lemma~\ref{Mas} for the sake of completeness. First of all for $k>2$, we investigate the $H^k$-estimates of the Navier-Stokes equation \eqref{NSE} for $t\in[0, T]$ as follows:
\begin{align*}
    \|u\|^2_{H^k} + \nu \int_{0}^{t} \|\nabla u\|^2_{H^k} dt' \leq \|u_0\|^2_{H^k} \exp\left( C\int_{0}^{t} \|u\|_{H^k}  dt' \right).
\end{align*}
Taking $T= \frac{1}{4 C\|u_0\|_{H^k}}$ and assuming $\|u\|_{L^{\infty}_t H^k} \leq 4 \|u_0\|_{H^k}$, then  we know 
\begin{align*}
    \|u\|_{H^k} + \left( \nu \int_{0}^{t} \|\nabla u\|^2_{H^k} dt' \right)^{1/2} \leq \|u_0\|_{H^k} e^{1/2} \leq 2 \|u_0\|_{H^k}.
\end{align*}
Inductively we obtain the uniform estimates for solution on $[0, T]$
\begin{align}\label{UE}
    \|u\|_{H^k} \leq 2 \|u_0\|_{H^k}.
\end{align}
One can see that the uniform estimates for the Navier-Stokes 
equations \eqref{NSE} is also valid for the Euler equations \eqref{EE}.

Let $w:=u-v$.  By \eqref{NSE} and \eqref{EE}, we know $w$ satisfy 
\begin{equation}\label{we}
\begin{cases}
&\pa_t w + u\cdot \na w  + w\cdot \nabla v +\na (p-q) =\nu \De w + \nu \De v , \quad \nabla\cdot w =0,\\
&w|_{t=0}=0.
\end{cases}
\end{equation}
Applying Lemma \ref{CL} and \eqref{UE}, for $k>2$, we deduce the $H^s$-estimates for the solutions of \eqref{we} that 
\begin{align*}
 \frac{1}{2} \frac{d}{dt}\|w\|^2_{H^{k-2}}+\nu \|\nabla w\|^2_{H^{k-2}}&\lesssim \|w\|^2_{H^{k-2}} \|u\|_{H^k}+  \|w\|^2_{H^{k-2}} \|v\|_{H^k}+ \nu \|w\|_{H^{k-2}} \|v\|_{H^k},\\
 &\lesssim\|w\|^2_{H^{k-2}} \|u_0\|_{H^k}+ \nu\|w\|_{H^{k-2}} \|u_0\|_{H^k},
\end{align*}
which derives the following estimates
\begin{align*}
     \frac{d}{dt}\|w\|_{H^{k-2}}\lesssim  \|w\|_{H^{k-2}} \|u_0\|_{H^k} + \nu \|u_0\|_{H^k}.
\end{align*}
By Gronwall's inequality, we get
\begin{align}\label{LV}
    \|w\|_{L^2}\leq \|w\|_{H^{k-2}}\leq \nu CT \|u_0\|_{H^k}\exp\left(CT\|u_0\|_{H^k}\right)\lesssim \nu.
\end{align}
According to the interpolation theory, for $k-2 \leq k' < k$, we know 
\begin{align*}
    \|w\|_{\dot{H}^{k'}}\lesssim \|w\|^{\frac{k-k'}{2}}_{\dot{H}^{k-2}}\|w\|^{1-\frac{k-k'}{2}}_{H^k}\lesssim \nu^{\frac{k-k'}{2}}\|u_0\|^{1-\frac{k-k'}{2}}_{H^k}.
\end{align*}
Then we take $u^{\sigma}_0:=\mathcal{F}^{-1} \left( \mathcal{I}_{|\xi| \leq \frac{1}{\sigma}} \mathcal{F}u_0\right) $, where $\sigma $ is sufficiently small. One can easily deduce that
\begin{align*}
    &\|u^\sigma_0\|_{H^k} \lesssim \|u_0\|_{H^k},\\
    &\|u^\sigma_0\|_{H^{k+1}} \lesssim \frac{1}{\sigma} \|u_0\|_{H^k},~~~~\|u^\sigma_0\|_{H^{k+2}}  \lesssim \frac{1}{\sigma^2} \|u_0\|_{H^k}, \\
    &\|u^\sigma_0-u_0\|_{H^{k'}} \lesssim \sigma^{k-k'} \|u^\sigma_0-u_0\|_{H^k}.
\end{align*}
Considering the evolution equations for initial data $u^{\sigma}_0$, we get
\begin{equation}\label{NSEm}
\begin{cases}
&\pa_t u^{\sigma }+ u^{\sigma }\cdot \na u^{\sigma } +\na p^{\sigma }=\nu \De u^{\sigma }, \quad \na \cdot u^{\sigma }=0,\\
&u|_{t=0}=u^{\sigma }_0.
\end{cases}
\end{equation}
and
\begin{equation}\label{EEm}
\begin{cases}
&\pa_t v^{\sigma }+ v^{\sigma }\cdot \na v^{\sigma } +\na q^{\sigma }=0, \quad \na \cdot v^{\sigma }=0,\\
&u|_{t=0}=u^{\sigma }_0.
\end{cases}
\end{equation}
Let $l\geq k$. Similarly, we also have
\begin{align}\label{NSM}
    \|u^\sigma\|_{H^l}\lesssim \|u^\sigma_0\|_{H^l}\lesssim \|u_0\|_{H^l},
\end{align}
and 
\begin{align}\label{EM}
    \|v^\sigma\|_{H^l}\lesssim \|v^\sigma_0\|_{H^l}\lesssim \|u_0\|_{H^l}.
\end{align}

Let $e^\sigma:= u^\sigma - u$. By \eqref{NSE} and \eqref{NSEm}, we have
\begin{equation}\label{mwe}
\begin{cases}
&\pa_t e^\sigma + u\cdot \na e^\sigma  + e^\sigma\cdot\nabla u^\sigma +\na (p^\sigma-q^\sigma) =\nu \De e^\sigma  , \quad \nabla\cdot e^\sigma =0,\\
&e^\sigma|_{t=0}= u^\sigma_0-u_0.
\end{cases}
\end{equation} 
Applying Lemma \ref{CL} and \eqref{mwe}, we obtain
\begin{align*}
    \frac{1}{2} \frac{d}{dt} \|e^\sigma\|^2_{H^{k-1}}+ \nu\|\nabla e^\sigma\|^2_{H^{k-1}}\lesssim \|e^\sigma\|^2_{H^{k-1}}\|u\|_{H^k}+\|e^\sigma\|^2_{H^{k-1}}\|u^\sigma \|_{H^k},
\end{align*}
which implies that
\begin{align*}
     \frac{d}{dt} \|e^\sigma\|_{H^{k-1}} \lesssim \left(\|u^\sigma\|_{H^k}+\|u \|_{H^k}\right)\|e^\sigma\|_{H^{k-1}}.
\end{align*}
By Gronwall's inequality, we get
\begin{align*}
    \|e^\sigma\|_{H^{k-1}} &\lesssim \exp\left( CT \|u_0\|_{H^k}\right) \|e_0\|_{H^{k-1}}\lesssim\|e_0\|_{H^{k-1}}.
\end{align*}
Applying Lemma \ref{CL} and \eqref{NSM}, we deduce that
\begin{align*}
 \frac{1}{2} \frac{d}{dt}\|e^\sigma\|^2_{H^k}+ \nu\|\nabla e^\sigma\|^2_{H^k} &\lesssim \|e^\sigma\|^2_{H^k} \|u\|_{H^k}+  \|e^\sigma\|^2_{H^k} \|u^\sigma\|_{H^k}+ \|u^\sigma\|_{H^{k+1}} \|e^\sigma\|_{H^{k-1}} \|e^\sigma\|_{H^k}, \\
 &\lesssim \|e^\sigma\|^2_{H^k} \|u_0\|_{H^k}+ \frac{1}{\sigma} \|u_0\|_{H^k}   \|e^\sigma_0\|_{H^{k-1}} \|e^\sigma\|_{H^k}.
\end{align*}
which derives the following estimates
\begin{align*}
   \frac{d}{dt} \| e^\sigma\|_{H^k} \lesssim  \|e^\sigma\|_{H^k} \|u_0\|_{H^k}+\frac{1}{\sigma} \|u_0\|_{H^k}   \|e^\sigma_0\|_{H^{k-1}}.
\end{align*}
By Gronwall's inequality, we get
\begin{align}\label{NSME}
    \|e^\sigma\|_{H^k}&\lesssim \exp\left(C T \|u_0\|_{H^k} \right)\left( \|e^\sigma_0\|_{H^k} +  \frac{1}{\sigma} T \|u_0\|_{H^k} \sigma \|e^\sigma_0\|_{H^k} \right)\nonumber\\
                      &\lesssim \|u^\sigma_0 -u_0\|_{H^k}.
\end{align}
Similarly, we also have 
\begin{align}\label{EME}
    \|v^\sigma -v\|_{H^k}\lesssim \|u^\sigma_0 -u_0\|_{H^k}.
\end{align}

Let $w^\sigma:= u^\sigma -v^\sigma$. Using \eqref{NSEm} and \eqref{EEm}, we infer that
\begin{equation}\label{mwe1}
\begin{cases}
&\pa_t w^\sigma + u^\sigma\cdot \na w^\sigma  + w^\sigma\cdot \nabla v^\sigma +\na (p^\sigma-q^\sigma) =\nu \De w^\sigma + \nu \De v^\sigma , \quad \nabla\cdot w^\sigma =0,\\
&w^\sigma|_{t=0}=0.
\end{cases}
\end{equation} 
From \eqref{mwe1}, then we have
\begin{align*}
 \frac{1}{2} \frac{d}{dt}\|w^\sigma\|^2_{H^{k-1}}+\nu\|\nabla w^\sigma\|^2_{H^{k-1}}&\lesssim \|w^\sigma\|^2_{H^{k-1}} \|u^\sigma\|_{H^k}+  \|w^\sigma\|^2_{H^{k-1}} \|v^\sigma\|_{H^k}+ \nu \|w^\sigma\|_{H^{k-1}} \|v^\sigma\|_{H^{k+1}},\\
 &\lesssim\|w\|^2_{H^{k-1}} \|u_0\|_{H^k}+ \frac{\nu}{\sigma}\|w\|_{H^{k-1}} \|u_0\|_{H^k},
\end{align*}
which  derives the following estimates
\begin{align*}
   \frac{d}{dt}\|w^\sigma\|_{H^{k-1}}\lesssim  \|w^\sigma\|_{H^{k-1}} \|u_0\|_{H^k} + \frac{\nu}{\sigma} \|u_0\|_{H^k}.
\end{align*}
By Gronwall's inequality, we get
\begin{align*}
    \|w^\sigma\|_{H^{k-1}}&\lesssim    \frac{\nu T}{\sigma}\|u_0\|_{H^k} exp\left(CT\|u_0\|_{H^k}\right)\lesssim \frac{\nu}{\sigma}.
\end{align*}
Applying Lemma \ref{CL} again, we obtain
\begin{align*}
    \frac{1}{2} \frac{d}{dt} \|w^\sigma\|^2_{H^k} + \nu \|\nabla w^\sigma\|^2_{H^k} &\lesssim  \|u^\sigma\|_{H^k}\|w^\sigma\|^2_{H^k}  
+\|v^\sigma\|_{H^k}\|w^\sigma\|^2_{H^k}\nonumber\\
&\quad+\|w^\sigma\|_{H^{k-1}}\|v^\sigma\|_{H^{k+1}}\|w^\sigma\|_{H^k} +\|v^\sigma\|_{H^{k+2}}\|w^\sigma\|_{H^k}.
\end{align*}
This together with \eqref{EM} implies that
\begin{align*}
    \frac{d}{dt} \| w^\sigma\|_{H^k} &\lesssim   \|u^\sigma\|_{H^k}\|w^\sigma\|_{H^k} + \|v^\sigma\|_{H^k}\|w^\sigma\|_{H^k} \nonumber\\
    &\quad+\|w^\sigma\|_{H^{k-1}}\|v^\sigma\|_{H^{k+1}} + \nu\| v^\sigma\|_{H^{k+2}}\nonumber\\
    &\lesssim \|u_0\|_{H^k}\|w^\sigma\|_{H^k} + \frac{\nu}{\sigma^2}\|u_0\|_{H^k}
\end{align*}
By Gronwall's inequality, we get
\begin{align}\label{MV}
    \|w^\sigma\|_{H^k}\lesssim \exp\left( CT\|u_0\|_{H^k}\right)T\|u_0\|_{H^k} \frac{\nu}{\sigma^2}\lesssim\frac{\nu}{\sigma^2}.  
\end{align}
Combining \eqref{NSME}, \eqref{EME} and \eqref{MV},  we infer that 
\begin{align*}
    \|w\|_{H^k}&\lesssim \|e^\sigma\|_{H^k} +\|v^\sigma-v\|_{H^k}+ \|w^\sigma\|_{H^k}\nonumber\\
    &\lesssim\|u^\sigma_0 - u_0\|_{H^k}+\frac{\nu}{\sigma^2}.
\end{align*}
In particular by taking $\sigma=\frac{1}{N}$ we can obtain that
\begin{align}\label{V}
    \|w\|_{H^k}&\lesssim \|\Pi_N e\|_{H^k} +\|\Pi_N v-v\|_{H^k}+ \|\Pi_N w\|_{H^k}\nonumber\\
    &\lesssim\|\Pi_N u_0 - u_0\|_{H^k}+ N^2\nu.
\end{align}

\section*{Acknowledgement}
We would like to thank Dr. Brian Wetton for suggesting the iterative Fourier spectral scheme \eqref{5.1}. Z. Luo was partially supported by the China Postdoctoral Science Foundation (No. 2022TQ0077
and No. 2023M730699) and Shanghai Post-doctoral Excellence Program (No. 2022062).

\bibliographystyle{abbrv}

\begin{thebibliography}{10}
\bibitem{ABC22}
D.~Albritton, E.~Bru\'e and M.~Colombo. Non-uniqueness of Leray solutions of the forced Navier-Stokes equations. {\em Annals of Mathematics}, 196(1): 415-455, 2022.


\bibitem{BCD11}
H.~Bahouri, J.~Y.~Chemin and R.~Danchin. Fourier analysis and nonlinear partial differential
equations, {\em Springer, Heidelberg}, 2011.

\bibitem{BLW22}
G.~Bai, B.~Li and Y.~Wu.
\newblock{A constructive low-regularity integrator for the 1d cubic nonlinear Schr\"odinger equation under the Neumann boundary condition.}
\newblock{\em IMA J. Numer. Anal.}, 43(6): 3243–3281, 2023.

\bibitem{BKM84}
J.~T.~Beale, T.~Kato and A.~Majda. Remarks on the Breakdown of Smooth Solutions for the 3D Euler Equations. {\em Comm. Math. Phys.}, 94(1):61–66 1984.


\bibitem{BL15}
J.~Bourgain, D.~Li. Strong ill-posedness of the Incompressible Euler Equation in Borderline Sobolev Spaces. {\em Invent. Math.}, 201(1):97–157, 2015.

\bibitem{BP08}
J.~Bourgain and N.~Pavlovic. Ill-posedness of the Navier-Stokes Equations in a Critical
Space in 3D. {\em J. Funct. Anal.}, 255(9):2233–2247 2008.

\bibitem{BSV19}
T.~Buckmaster, S.~Shkoller and V.~Vicol. Nonuniqueness of weak solutions to the SQG equation. {\em Commun.
Pure Appl. Math.}, 72(9): 1809–1874, 2019.

\bibitem{CKL21}
X.~Cheng, H.~Kwon and D.~Li. Non-uniqueness of stationary weak solutions to the surface quasi-geostrophic equations. {\em Comm. Math. Phys.}, 388 (3): 1281-1295, 2021.



\bibitem{Chorin68}
A.J.~Chorin. Numerical solution of the Navier–Stokes equations. {\em Math. Comput.}, 22, 745–762, 1968.

\bibitem{Chorin69}
A.J.~Chorin. On the convergence of discrete approximations to the Navier–Stokes equations. {\em Math. Comput.}, 23, 341–353, 1969.

\bibitem{CDE22}
P.~Constantin, T.~D.~Drivas, T.~M.~Elgindi. Inviscid Limit of Vorticity Distributions in the Yudovich Class. {\em Comm. Pure Appl. Math.}, 75(1):60–82, 2022.

\bibitem{DLS13}
C.~De Lellis and L.~Sz\'ekelyhidi, Jr. Dissipative continuous Euler flows. {\em Invent. Math.}, 193(2): 377–407, 2013.



\bibitem{EL95}
W.~E and J.~Liu. Projection method I: convergence and numerical boundary layers. {\em SIAM journal on numerical analysis}, 1017-1057, 1995.
\bibitem{EL02}
W.~E and J.~Liu. Projection method III: spatial discretization on the staggered grid[J]. {\em Mathematics of computation}, 71(237): 27-47, 2002.



\bibitem{E21}
T.~Elgindi. Finite-time Singularity Formation for $C^{1,\alpha}$
Solutions to the Incompressible Euler Equations on $\mathbb{R}^
3$. {\em Ann. of Math. (2)}, 2021, 194(3):647–727.

\bibitem{FK64}
H.~Fujita, T.~Kato. On the Navier-Stokes Initial Value Problem I. {\em Arch. Rational Mech. Anal.}, 16:269–315, 1964.

\bibitem{GZ03}
B.~Guo and J.~Zou. Fourier spectral projection method and nonlinear convergence analysis for Navier–Stokes equations. {\em Journal of mathematical analysis and applications}, 282(2): 766-791, 2003.

\bibitem{GLY19}
Z.~Guo, J.~Li and Z.~Yin. Local Well-posedness of the Incompressible Euler Equations in $B^1_{\infty,1}$
and the Inviscid Limit of the Navier-Stokes Equations. {\em J. Funct. Anal.}, 276(9): 2821–2830, 2019.

\bibitem{Hakim22}
A.~Hakim. SimJournal: Ammar Hakim’s Simulation Journal (2022), https://ammar-hakim.org/sj/index.html.



\bibitem{He13}
Y.~He. Euler implicit/explicit iterative scheme for the stationary Navier–Stokes equations. {\em Numer. Math.}, 123, 67–96, 2013.

\bibitem{HR82}
J.G.~Heywood and R.~Rannacher. Finite element approximation of the nonstationary Navier–Stokes problem. I. Regularity of solutions and second-order spatial discretization. {\em SIAM J. Numer. Anal}., 19,
275–311, 1982.

\bibitem{HR90}
J.G.~Heywood and R.~Rannacher. Finite-element approximation of the nonstationary Navier–Stokes problem part IV: error analysis for second-order time discretization. {\em SIAM J. Numer. Anal.}, 27, 353–384, 1990.

\bibitem{HW93}
T. Y.~Hou and B.~Wetton.
{Second-Order Convergence of a Projection Scheme for the Incompressible Navier–Stokes Equations with Boundaries},
{\em SIAM Journal on Numerical Analysis},
30(3): 609--629, 1993.

\bibitem{Is18}
P.~Isett. A proof of Onsager’s conjecture, {\em Ann. of Math.}, 188(3): 871–963, 2018.

\bibitem{KS14}
A.~Kiselev, V.~Sverak. Small Scale Creation for Solutions of the Incompressible Two-dimensional Euler Equation. {\em Ann. of Math. (2)}, 180(3):1205–1220, 2014.

\bibitem{L34}
J.~Leray. Sur Le Mouvement Dun Liquide Visqueux Emplissant Lespace. {\em Acta Math.}, 63(1):193–248, 1934.

\bibitem{LL11}
Z. Lei and F. Lin. 
Global mild solutions of Navier–Stokes equations.
{\em Communications on Pure and Applied Mathematics}, 64(9): 1297-1304, 2011.

\bibitem{Lib21}
B.~Li. A bounded numerical solution with a small mesh size implies existence of a smooth solution to the Navier–Stokes equations.{\em Numerische Mathematik}, 147(2): 283-304, 2021.

\bibitem{LMS22}
B.~Li, S.~Ma and K.~Schratz. A Semi-implicit Exponential Low-Regularity Integrator for the Navier--Stokes Equations. {\em SIAM J. Numer. Anal.}, 60(4): 2273-2292, 2022.




\bibitem{Luo19}
 X.~Luo. Stationary solutions and nonuniqueness of weak solutions for the Navier–Stokes equations in
high dimensions. {\em Arch. Ration. Mech. Anal.}, 233: 701–747, 2019.


\bibitem{M07}
N.~Masmoudi, Remarks about the inviscid limit of the Navier-Stokes system, {\em Commun. Math. Phys.}, 270(3): 777–788, 2007.

\bibitem{MR12}
N.~Masmoudi, F.~Rousset. Uniform regularity for the Navier–Stokes equation with Navier boundary condition. {\em Archive for Rational Mechanics and Analysis}, 203: 529-575, 2012.


\bibitem{RS21}
F.~Rousset and K.~Schratz. A general framework of low regularity integrators, {\em SIAM J.
Numer. Anal.}, 59(3): 1735--1768, 2021.



\bibitem{S88}
 E.~S\"uli. Convergence and non-linear stability of the Lagrange–Galerkin method for the Navier–Stokes equations. {\em Numer. Math.}, 53, 459–483, 1988.

\bibitem{Wang12}
 X.~Wang. An efficient second order in time scheme for approximating long time statistical properties of the two dimensional Navier–Stokes equations. {\em Numer. Math.}, 121: 753–779, 2012.

\bibitem{WZ22}
Y. Wu and X. Zhao. Optimal convergence of a second order low-regularity integrator for the KdV equation.
{\em IMA J. Numer. Anal.}, 42(4): 3499–3528, 2022.

\bibitem{Y63}
V.~I.~Yudovich. Non-stationary Flows of an Ideal Incompressible Fluid. {\em Z. Vycisl.
Mat i Mat. Fiz.}, 3:1032–1066, 1963.

\end{thebibliography}

\end{document}